\newcommand{\udots}{\mathinner{\mskip1mu\raise1pt\vbox{\kern7pt\hbox{.}}
\mskip2mu\raise4pt\hbox{.}\mskip2mu\raise7pt\hbox{.}\mskip1mu}}
\newcommand{\SO}{{\mathcal{O}}}
\newcommand{\PP}{\mathbb{P}}
\newcommand{\Ext}{\operatorname{Ext}}
\newcommand{\Hilb}{\operatorname{Hilb}}
\newcommand{\Hom}{\operatorname{Hom}}
\newcommand{\too}{\longrightarrow}
\newcommand{\wt}{\widetilde}
\newcommand{\gon}{\operatorname{gon}}
\newcommand{\op}{\operatorname}
\newtheorem{proposition}{Proposition}[section]
\newtheorem{theorem}[proposition]{Theorem}
\newtheorem{lemma}[proposition]{Lemma}
\newtheorem{conjecture}[proposition]{Conjecture}
\newtheorem{corollary}[proposition]{Corollary}
\theoremstyle{remark}
\newtheorem{definition}[proposition]{Definition}
\newtheorem{remark}[proposition]{Remark}
\numberwithin{equation}{section}
\begin{document}

\title[Semi-orthogonal decomposition of symmetric products of curves]{Semi-orthogonal decomposition of symmetric 
products of curves and canonical system}

\author[I. Biswas]{Indranil Biswas}
\address{School of Mathematics, Tata Institute of Fundamental
Research, Homi Bhabha Road, Mumbai 400005, India}
\email{indranil@math.tifr.res.in}

\author[T. G\'omez]{Tom\'as L. G\'omez}
\address{Instituto de Ciencias Matem\'aticas (CSIC-UAM-UC3M-UCM),
Nicol\'as Cabrera 15, Campus Cantoblanco UAM, 28049 Madrid, Spain}
\email{tomas.gomez@icmat.es}

\author[K.-S. Lee]{Kyoung-Seog Lee}
\address{Center for Geometry and Physics, Institute for Basic Science
(IBS), Pohang 37673, Republic of Korea} 
\email{kyoungseog02@gmail.com}

\subjclass[2010]{13D09, 14F05, 14H40, 14H51}

\keywords{Semi-orthogonal decomposition, symmetric product, gonality, Albanese map.}

\begin{abstract}
Let $C$ be an irreducible smooth complex projective curve of genus $g \,\geq\, 2$ and $C_d$ its $d$-fold 
symmetric product. In this paper, we study the question of semi-orthogonal decompositions 
of the derived category of $C_d$. This entails investigations of the canonical system
on $C_d$, in particular its base locus.
\end{abstract}

\maketitle

\section{Introduction}

Let $C$ be an irreducible smooth complex projective curve of genus $g \,\geq\, 2.$ Let $C^d\,=\,C\times \stackrel{d}\cdots 
\times C$ be its Cartesian product, and let $C_d$ be the $d$-fold symmetric product, meaning the quotient of 
$C^d$ by the action of the symmetric group of $d$ letters. We address the question whether the bounded derived category of 
coherent sheaves \[D(C_d)\,:=\,D^b_{coh}(C_d)\] admits a non-trivial semi-orthogonal decomposition (Definition \ref{defsod}).

Semi-orthogonal decomposition is one of the basic notions in the
theory of derived categories of coherent sheaves on algebraic
varieties. When the derived category of an algebraic variety admits a
semi-orthogonal decomposition, we can divide the category into smaller
pieces and try to understand the whole triangulated category via its
components. It turns out
that semi-orthogonal decompositions of derived categories of algebraic
varieties are closely related to birational geometry, Hochschild
homology and cohomology, K-theory, mirror symmetry, moduli theory,
motives, etc. See \cite{Ku14} and
\cite{AB} and references therein for an overview of the role of semi-orthogonal decompositions in algebraic geometry.
As a concrete example, we can mention the result of Bernardara and Bolognesi \cite{BB} giving a criterion for the rationality of a conic bundle on a minimal rational surface in terms of the existence of certain semi-orthogonal decomposition of its derived category. Also, there is a conjecture of Kuznetsov \cite[Conjecture 1.1]{Ku10} about the rationality of a smooth cubic fourfold in terms of one of the components of certain semi-orthogonal decomposition. Last but not least, semi-orthogonal decomposition can be used to investigate geometry of moduli spaces of instanton or ACM bundles on some Fano varieties \cite{Ku12}.

In general, it is very hard to classify all possible semi-orthogonal
decompositions of the derived category of a given variety. Even the
question of which algebraic variety can have a non-trivial
semi-orthogonal decomposition is still widely open. 

There are several known classes of varieties who do not admit non-trivial semi-orthogonal decomposition (cf. \cite{KO, Ok}). From the earlier works, we can see that the existence of non-trivial semi-orthogonal decomposition of the derived category of an algebraic variety is closely related to the base locus of the canonical bundle of the variety. This motivated us to study base locus of the canonical line bundle of $C_d$.

This study of the base locus of the canonical bundle and semi-orthogonal decomposition of the derived category of $C_d$ was inspired by a conjecture of M. S. Narasimhan. Recently, Narasimhan proved in \cite{Na1, Na2} that the derived category of $C$ can be embedded into the derived category of the moduli space $\mathcal{SU}_C(2,L)$ of stable bundles of rank 2 and fixed determinant $L$ of degree $1$. A similar result was obtained by Fonarev and Kuznetsov for general curves via different method (cf. \cite{FK}). Narasimhan conjectured that the derived category of the moduli space admits a semi-orthogonal decomposition as follows (Belmans, Galkin and Mukhopadhyay have independently stated the same conjecture in \cite{BGM}).

\begin{conjecture}\label{mainconjecture}
The derived category of $\mathcal{SU}_C(2,L)$ has the following semi-orthogonal decomposition
$$
D(\mathcal{SU}_C(2,L))=\langle D(pt), D(pt), D(C), D(C), \cdots,
D(C_{g-2}), D(C_{g-2}), D(C_{g-1}) \rangle.
$$
(two copies of $D(C_i)$ for $i\,<\,g-1$ and one copy for $i\,=\,g-1$).
\end{conjecture}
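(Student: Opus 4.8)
The plan is to deduce the decomposition by interpolating between $\mathcal{SU}_C(2,L)$ and a variety whose derived category is completely understood, and then transporting a semi-orthogonal decomposition along the interpolation. The natural bridge is Thaddeus's moduli of stable pairs: one fixes a rank two bundle $E$ with $\det E$ a suitable odd-degree twist of $L$ together with a nonzero section $s \in H^0(E)$, and imposes stability depending on a real parameter $\sigma$. As $\sigma$ sweeps through its range it crosses finitely many walls, and at each wall the moduli space $M_\sigma$ undergoes an elementary birational modification --- a flip, and at the two extreme walls a smooth blow-up. At one extreme the space is a projective space, arising as a space of extensions of line bundles on $C$, whose derived category carries the Beilinson exceptional collection of line bundles; at the other extreme it is $\mathcal{SU}_C(2,L)$, or a projective bundle over it, recovered once the section becomes irrelevant to stability.

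As a preliminary sanity check I would verify the numerics: Hochschild homology is additive under semi-orthogonal decompositions, so the Hodge numbers of $\mathcal{SU}_C(2,L)$ must equal the sum of those predicted by the right-hand side. Matching these --- using the known Hodge theory of $\mathcal{SU}_C(2,L)$ and of the symmetric products $C_i$ --- both confirms the shape of the answer and pins down the expected multiplicities, namely two copies of $D(C_i)$ for $i<g-1$ and one for $i=g-1$.

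The central step is the transport. First I would identify, following Thaddeus and Bradlow, the flipping locus at the $i$-th wall as a projective bundle over the symmetric product $C_i$, via the Abel--Jacobi description of the pairs that destabilize there. Then, crossing each wall, I would apply the comparison of derived categories across a standard flip: in the window (variation of GIT) formalism of Halpern-Leistner and of Ballard--Favero--Katzarkov, the categories on the two sides of a flip with smooth center $Z$ differ by controlled copies of $D(Z)$, with explicit fully faithful functors between them, while at the extreme walls Orlov's blow-up formula supplies the analogous comparison. Running this inductively from the projective-space end, each crossing deposits one or more copies of $D(C_i)$, and tallying them over all walls reproduces the multiplicities found above; the residual exceptional objects surviving from the Beilinson collection account for the two copies of $D(pt)$, and the embedding $D(C) \hookrightarrow D(\mathcal{SU}_C(2,L))$ of Narasimhan and of Fonarev--Kuznetsov, recalled above, is recovered as the $i=1$ building block.

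The hard part will be turning the individual wall-crossing comparisons into a single global semi-orthogonal decomposition of $\mathcal{SU}_C(2,L)$. One must choose the windows coherently across all walls at once, prove that the accumulated images $D(C_i)$ are mutually semi-orthogonal and generate, and confirm the exact multiplicities rather than merely the existence of embeddings; peeling $D(\mathcal{SU}_C(2,L))$ out of the projective-bundle endpoint, and excluding spurious phantom factors, is especially delicate. It is precisely here that the fine geometry of the building blocks enters: controlling the functors and the generation statement requires understanding the canonical system of $C_i$ and its base locus, which is the analysis carried out in the present paper.
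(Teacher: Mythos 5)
The statement you are addressing is Conjecture \ref{mainconjecture}; the paper does not prove it, but records it as an open conjecture of Narasimhan (stated independently by Belmans, Galkin and Mukhopadhyay), and its own contributions are only in the direction of the \emph{building blocks}: nonexistence of semi-orthogonal decompositions of $D(C_d)$ for small $d$. So there is no proof in the paper to compare yours against, and the real question is whether your proposal constitutes a proof. It does not. What you have written is a research program --- essentially the Thaddeus wall-crossing strategy that the literature (e.g.\ Toda's $d$-critical flips, cited in the paper as \cite{To}, which proves the analogous decomposition for $C_d$ with $d>g-1$) pursues --- and you yourself defer every step that carries the mathematical content. Concretely: (i) the comparison of derived categories across a Thaddeus flip is not automatic; the general window/VGIT machinery produces a semi-orthogonal decomposition with pieces $D(Z)$ only under precise numerical conditions on the weights of the $\mathbb{G}_m$-action on the normal bundles of the fixed loci, and these must be verified wall by wall, not invoked as a ``standard flip''; (ii) the multiplicities (two copies of $D(C_i)$ for $i<g-1$, one for $i=g-1$) cannot be ``pinned down'' by Hochschild homology alone, since additivity of $HH_*$ is a necessary check, not a sufficient one --- it does not exclude phantoms or a differently organized decomposition with the same numerical shadow; (iii) the final step, peeling $D(\mathcal{SU}_C(2,L))$ out of the projective-bundle endpoint and proving that the accumulated subcategories are mutually semi-orthogonal \emph{in a single common ambient category} and generate it, is precisely the open problem, and asserting that it is ``especially delicate'' is not an argument.

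To be fair, your outline is the correct and standard line of attack, and your closing remark that the argument requires control over the canonical systems and base loci of the $C_i$ correctly identifies why the present paper's results (Proposition \ref{baselocus}, Corollary \ref{cor:cd-sod}, Theorem \ref{c2-sod}) are relevant preliminaries. But as submitted, the proposal proves nothing beyond what is already known: the existence of the embeddings $D(C)\hookrightarrow D(\mathcal{SU}_C(2,L))$ of \cite{Na1,Na2,FK} and the numerical consistency of the conjecture with the motivic decomposition of \cite{Lee}. If you want to turn this into mathematics, the place to start is a single wall: identify the flip locus, compute the weights, and write down the fully faithful functor and its essential image explicitly.
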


It turns out that there is a motivic decomposition of $\mathcal{SU}_C(2,L)$ which is compatible with the above 
conjecture (cf. \cite{Lee}). From the point of view of this conjecture it is of interest whether the 
derived categories of symmetric powers of curves can be further decomposed. Okawa proved that the derived 
category of a curve of genus $g \,\geq\, 2$ cannot have a non-trivial semi-orthogonal decomposition \cite{Ok}.
On the other hand, Toda gives an
explicit semi-orthogonal decomposition of $C_d$ for large $d$ in \cite[Corollary 5.11]{To}.
Let $J$ denotes the Jacobian of $C$. Toda proves that,
if $d\, > \,g-1$, then:
$$
D(C_d)=\langle \overbrace{D(J),{\ldots} ,D(J)}^{d-g+1},D(C_{-d+2g-2}) \rangle.
$$

Recall that the gonality $\gon(C)$ of a curve $C$ is the lowest degree among all nonconstant
morphisms from $C$ to the projective line $\PP^1$. Equivalently, it is
the lowest degree of a line bundle $L$ on $C$ with $h^0(L)\,\geq\,2$. 
In this paper, we prove the following theorem.

\begin{theorem}[Corollary \ref{cor:cd-sod}]
Let $C$ be a smooth complex projective curve of genus $g\,\geq\, 3$, and
let $d$ be a positive integer with $d\,<\,\gon(C)$. Then there is no non-trivial
semi-orthogonal decomposition of $D(C_d)$.
\end{theorem}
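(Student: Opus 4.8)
The plan is to reduce the statement to the \emph{base-point-freeness} of the canonical bundle $\omega_{C_d}$, and then to invoke the relationship between semi-orthogonal decompositions and the base locus of the canonical system recalled above (in the spirit of \cite{KO, Ok}). The precise criterion I would use is the following: if $X$ is a connected smooth projective variety for which the complement $X\setminus\operatorname{Bs}|\omega_X|$ of the base locus of the canonical system is connected, then $D(X)$ admits no non-trivial semi-orthogonal decomposition; in particular this applies whenever $\operatorname{Bs}|\omega_X|=\emptyset$. Since $C_d$ is smooth, projective and irreducible (because $C$ is), it suffices to prove that $\omega_{C_d}$ is globally generated whenever $d<\gon(C)$, i.e.\ that $\operatorname{Bs}|\omega_{C_d}|=\emptyset$.

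To analyze this base locus I would begin from the classical description of the canonical forms on the symmetric product, $H^0(C_d,\omega_{C_d})\cong\wedge^d H^0(C,\omega_C)$, obtained by descending the $S_d$-anti-invariant top forms along the quotient map $C^d\to C_d$. Note first that $d<\gon(C)\le g$, so this space is nonzero and the canonical system is nonempty. Fixing a basis $\eta_1,\dots,\eta_g$ of $H^0(C,\omega_C)$ and writing $\eta_i=\phi_i(z)\,dz$ in a local coordinate, the section attached to $\eta_{i_1}\wedge\cdots\wedge\eta_{i_d}$ evaluates at a reduced point $D=p_1+\cdots+p_d$ (with the $p_b$ distinct) with leading coefficient the minor $\det\bigl[\phi_{i_a}(p_b)\bigr]_{a,b}$. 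Hence $D$ lies in $\operatorname{Bs}|\omega_{C_d}|$ if and only if every maximal minor of the $g\times d$ matrix $\bigl[\phi_i(p_b)\bigr]$ vanishes, that is, if and only if the images $\phi_K(p_1),\dots,\phi_K(p_d)$ under the canonical map $\phi_K\colon C\to\PP^{g-1}$ fail to be linearly independent.

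At this point geometric Riemann--Roch identifies the dimension of the linear span of the divisor on the canonical curve,
\[
\dim\langle\phi_K(D)\rangle \;=\; \deg D-1-\dim|D| \;=\; d-h^0(\mathcal{O}_C(D)),
\]
so the $d$ points span a $\PP^{d-1}$ (equivalently, the matrix has maximal rank $d$, equivalently $D$ is \emph{not} a base point) exactly when $h^0(\mathcal{O}_C(D))=1$; the secant/osculating refinement of geometric Riemann--Roch handles non-reduced divisors in the same way. Thus $\operatorname{Bs}|\omega_{C_d}|=\{\,D\in C_d : h^0(\mathcal{O}_C(D))\ge 2\,\}$. Finally, $h^0(\mathcal{O}_C(D))\ge 2$ for an effective divisor of degree $d$ produces a base-point-free pencil of degree $\le d$, whence $\gon(C)\le d$; contrapositively, $d<\gon(C)$ forces $h^0(\mathcal{O}_C(D))=1$ for \emph{every} effective divisor of degree $d$ (this is exactly the characterization of gonality recalled in the introduction). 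Therefore $d<\gon(C)$ makes the base locus empty and $\omega_{C_d}$ globally generated, and the criterion above yields the theorem. The hypothesis $g\ge 3$ guarantees $\gon(C)\ge 3$, so that the statement is non-vacuous for $d\ge 2$; the case $d=1$ reduces to Okawa's theorem for $D(C)$.

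The main obstacle is the base-locus computation rather than the closing categorical step. Making the evaluation ``section $=$ minor'' rigorous requires care with the equivariant descent along $C^d\to C_d$ (ramified along the big diagonal) and, more seriously, with the non-reduced points of $C_d$, where the naive pointwise evaluation must be replaced by the correct jet/osculating version of geometric Riemann--Roch in order to see that the \emph{scheme-theoretic} base locus is cut out precisely by the special-divisor condition $h^0\ge 2$. Once the base locus is thus identified with the locus of special divisors, its emptiness for $d<\gon(C)$—and hence the non-existence of a non-trivial semi-orthogonal decomposition of $D(C_d)$—is immediate.
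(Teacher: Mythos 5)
Your argument is correct and follows essentially the same route as the paper: identify $\operatorname{Bs}|K_{C_d}|$ with the locus of effective degree-$d$ divisors $D$ with $h^0(\mathcal{O}_C(D))\ge 2$ (which the hypothesis $d<\gon(C)$ makes empty), then invoke the Kawatani--Okawa criterion for base-point-free (indeed, finite base locus) canonical systems. The paper performs the base-locus identification cohomologically---the evaluation map at $z$ is $\wedge^d$ of the restriction $\gamma\colon H^0(K_C)\to H^0(K_C|_z)$, whose failure to be surjective is dual, via Serre duality, to $h^0(\mathcal{O}_C(z))\ge 2$---which is exactly the coordinate-free form of your minor/geometric Riemann--Roch computation and disposes of the non-reduced divisors without the jet-level care you flag.
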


We note that, for a generic curve $C$ of genus $g$, the gonality satisfies
$$
\gon(C)\,= \, \left \lfloor{\frac{g+3}{2}}\right \rfloor. 
$$

If $d\,=\,2$ we prove the following result for all curves (no
condition on gonality) with genus at least $3$: 

\begin{theorem}[Theorem \ref{c2-sod}]
Let $C$ be a smooth projective curve of genus $g \,\geq \,3$.
Then there is no non-trivial semi-orthogonal decomposition on $D(C_2)$.
\end{theorem}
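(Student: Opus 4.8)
The plan is to split according to the gonality of $C$ and to reduce the genuinely new case to a local analysis on a single contracted rational curve. If $C$ is non-hyperelliptic then $\gon(C)\geq 3>2=d$, so Corollary \ref{cor:cd-sod} applies verbatim and $D(C_2)$ has no non-trivial semi-orthogonal decomposition. Hence from now on I would assume that $C$ is hyperelliptic, i.e. $\gon(C)=2$, which is exactly the case left uncovered by Corollary \ref{cor:cd-sod}.

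The first step is to pin down the base locus of $|K_{C_2}|$. Using $H^0(C_2,K_{C_2})\cong\bigwedge^2 H^0(C,K_C)$, a reduced point $[p+q]$ lies in the base locus precisely when the canonical images $\phi_K(p)$ and $\phi_K(q)$ coincide in $\PP^{g-1}$; since the canonical map of a hyperelliptic curve is two-to-one onto a rational normal curve, this happens exactly along the pencil $R=g^1_2\cong\PP^1$ of divisors linearly equivalent to the hyperelliptic series (the diagonal contributing only the points $[2w]$ with $w$ a Weierstrass point, which already lie on $R$). Thus $\mathrm{Bs}\,|K_{C_2}|=R$, a smooth rational curve that is contracted to a point by the Abel--Jacobi morphism $u\colon C_2\to W_2\subseteq\Pic^2(C)$. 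Next I would record its numerical invariants via the classical intersection numbers $x^2=1$, $x\cdot\theta=g$, $\theta^2=g(g-1)$ on $C_2$: from $R\cdot\theta=0$ and $R\cdot x=1$ one gets $R\equiv\theta-(g-1)x$, whence $R^2=-(g-1)$, and from $K_{C_2}\equiv(g-3)x+\theta$ one gets $K_{C_2}\cdot R=g-3$. In particular $K_{C_2}$ is nef and big, so $C_2$ is a minimal surface of general type.

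Now suppose $D(C_2)=\langle\mathcal{A},\mathcal{B}\rangle$ were a non-trivial semi-orthogonal decomposition. I would feed this into the same mechanism that proves Corollary \ref{cor:cd-sod}: the closed supports of the two factors cover $C_2$ and can meet only inside $\mathrm{Bs}\,|K_{C_2}|=R$. As $C_2$ is an irreducible surface, $C_2\setminus R$ is connected, so one of the two factors, say $\mathcal{B}$, must be supported entirely on $R$. It therefore remains to prove that no non-zero admissible subcategory of $D(C_2)$ is supported on $R$, and this is the step I expect to be the main obstacle.

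For this final step the guiding computation is that $\mathcal{O}_R$ fails to be exceptional: because $K_{C_2}\cdot R=g-3\geq 0$, Serre duality together with the local-to-global spectral sequence gives $\Ext^2(\mathcal{O}_R,\mathcal{O}_R)\cong H^1(R,N_{R/C_2})^{\vee}\cong H^0(\PP^1,\mathcal{O}(g-3))^{\vee}\neq 0$ for $g\geq 3$. This alone does not suffice, since $\mathcal{B}$ need not be generated by $\mathcal{O}_R$, so I would study the whole triangulated subcategory of objects set-theoretically supported on $R$, using that $u$ contracts the single rational curve $R$ to a rational (cyclic quotient) surface singularity. The relevant numerical Grothendieck lattice is spanned by $[\mathcal{O}_R]$ and $[\mathcal{O}_{\mathrm{pt}}]$, with $\chi(\mathcal{O}_R,\mathcal{O}_R)=g-1$ and the point class lying in the radical of the Euler form; combining this degeneracy with the non-degeneracy forced on an orthogonal summand of $K_{\mathrm{num}}(C_2)$, and excluding phantom behaviour by means of the rationality of the contraction, I would conclude $\mathcal{B}=0$, contradicting non-triviality. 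The delicate point, on which I expect to spend the most effort, is precisely this classification of admissible subcategories supported on the contracted curve $R$.
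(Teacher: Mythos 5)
Your reduction is fine up to the last step: the non-hyperelliptic case via Corollary \ref{cor:cd-sod}, the identification of $\op{Bs}|K_{C_2}|$ with the contracted rational curve $R=\Gamma$, and the numbers $R^2=-(g-1)$, $K_{C_2}\cdot R=g-3$ (hence $K_{C_2}$ nef and big, $C_2$ minimal of general type) all agree with Lemma \ref{lemma1}, Proposition \ref{baselocus} and the setup of the paper. The genuine gap is the final step, which you yourself flag as the main obstacle: you must show that no nonzero admissible subcategory of $D(C_2)$ is supported on $R$, and what you offer is not a proof. Passing to the numerical Grothendieck lattice spanned by $[\mathcal{O}_R]$ and $[\mathcal{O}_{\mathrm{pt}}]$ can at best show that a factor supported on $R$ is numerically trivial, i.e.\ a potential (quasi-)phantom; the entire difficulty is to exclude that possibility, and ``the rationality of the contraction'' gives you no mechanism for doing so --- there is no general principle that the exceptional locus of a resolution of a rational (here cyclic quotient $\tfrac{1}{g-1}(1,1)$) singularity supports no phantom, and phantom categories are in fact known to exist on rational surfaces. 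So the argument stops exactly where the theorem's content begins.

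The paper does not attempt this classification. After establishing the same geometric inputs, it verifies the hypotheses of the Kawatani--Okawa surface criterion (Theorem \ref{KO-surface}: $C_2$ minimal of general type, $h^0(K_{C_2})={g\choose 2}>1$ by Lemma \ref{lemma3} --- a hypothesis you never record --- and negative definiteness of the intersection matrix of each one-dimensional connected component of $\op{Bs}|K_{C_2}|$, which here is just $\Gamma^2=1-g<0$) and then cites \cite[Theorem 1.8]{KO}. To complete your proof you should either invoke that theorem, whose hypotheses you have essentially already checked, or actually supply the classification of admissible subcategories supported on $R$; the latter is precisely the nontrivial content of the cited result and cannot be dispatched by the Euler-form computation you sketch.
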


This result is sharp because, if $g\,=\,2$ then $D(C_2)$
admits a semi-orthogonal decomposition
(recall that, for $g\,=\,2$, the Albanese map $C_2\too J$ is the blow-up of the
Jacobian at a point, and then apply the semi-orthogonal decomposition
formula for the blow-up \cite{Or}).
We conjecture the following (this has independently been stated in \cite{BGM}).

\begin{conjecture}
\label{conjecture for symmetric power}
Let $C$ be a projective smooth curve of genus $g \geq 2.$ Then there is no non-trivial semi-orthogonal decomposition on $D(C_d)$ for $1 \leq d \leq g-1$.
\end{conjecture}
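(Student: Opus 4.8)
The plan is to pin down the base locus of the canonical system on $C_d$ and then feed it into the support-theoretic obstruction to semi-orthogonal decompositions, exactly as in the two theorems above; the new difficulty is that for $d\ge\gon(C)$ this base locus is no longer empty.

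First I would compute $\operatorname{Bs}|\omega_{C_d}|$ explicitly. Using the classical identification $H^0(C_d,\omega_{C_d})\cong\wedge^d H^0(C,\omega_C)$ (the holomorphic $d$-forms on $C_d$ being the $\mathfrak{S}_d$-invariant $d$-forms on $C^d$), a basis of canonical sections is obtained by descending the forms $\omega_{i_1}\wedge\cdots\wedge\omega_{i_d}$, where $\omega_1,\dots,\omega_g$ is a basis of $H^0(C,\omega_C)$. Evaluated at a reduced divisor $D=p_1+\cdots+p_d$ these give the $d\times d$ minors of the matrix $(\omega_i(p_j))$, so every canonical section vanishes at $D$ exactly when this matrix has rank $<d$, i.e. when the points of $D$ fail to impose independent conditions on $|\omega_C|$; by Riemann--Roch this means $h^0(\mathcal{O}_C(D))\ge 2$. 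Hence $\operatorname{Bs}|\omega_{C_d}|$ is the special-divisor locus $C^1_d=\{D\in C_d:\ h^0(\mathcal{O}_C(D))\ge2\}$, which is empty precisely when $d<\gon(C)$. Intrinsically, the canonical map of $C_d$ factors through the Gauss map $C_d\dashrightarrow\Gr(g-d,H^0(\omega_C))$, $D\mapsto H^0(\omega_C(-D))$, followed by the Pl\"ucker embedding, and $C^1_d$ is exactly its indeterminacy locus.

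Next I would invoke the support obstruction of \cite{KO}: for any decomposition $D(C_d)=\langle\SA,\SB\rangle$ one has $\operatorname{Supp}\SA\cup\operatorname{Supp}\SB=C_d$ and $\operatorname{Supp}\SA\cap\operatorname{Supp}\SB\subseteq\operatorname{Bs}|\omega_{C_d}|=C^1_d$. Since $C_d$ is irreducible, the open set $C_d\setminus C^1_d$ is connected, and it is the disjoint union of the two relatively open pieces $\operatorname{Supp}\SA\setminus C^1_d$ and $\operatorname{Supp}\SB\setminus C^1_d$; connectedness forces one of them to be empty, so after relabeling $\SB\neq0$ is entirely supported on $C^1_d$. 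The conjecture is thus reduced to the statement that $D(C_d)$ admits no nonzero admissible subcategory supported on the special-divisor locus $C^1_d$; this is vacuous for $d<\gon(C)$, recovering the first theorem.

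This last step is where the whole difficulty sits, and it is genuinely open. By Martens' theorem $C^1_d$ has codimension at least $2$ in $C_d$ when $C$ is non-hyperelliptic, while for hyperelliptic $C$ it is a divisor --- the image of $C\times C_{d-2}\to C_d$ obtained by adjoining a hyperelliptic pair. Two natural lines of attack present themselves. One is to use that any component of a decomposition is stable under the tensorization action of the connected group $\Pic^0(C_d)=J$ (decompositions being rigid under this connected family of autoequivalences) and to combine this $J$-equivariance with the geometry of $C^1_d$ to force $\SB=0$. The other is an induction on $d$, degenerating objects supported on $C^1_d$ onto smaller symmetric products $C_{d'}$ with $d'<d$, where the statement is already known, with the base cases $d=1$ (Okawa \cite{Ok}) and $d=2$ (the second theorem above). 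I expect the \emph{hyperelliptic, and more generally low-gonality, codimension-one case} to be the main obstacle: there a hypothetical component can spread along an entire canonical divisor, so no codimension estimate can rule it out, and the $d=2$ theorem of the paper is precisely the first instance of the hands-on analysis that such a case demands.
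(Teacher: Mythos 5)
The statement you are asked about is labelled a \emph{conjecture} in the paper, and the paper does not prove it; it only proves the two special cases you cite ($d<\gon(C)$, and $d=2$ with $g\ge 3$) and, in Lemma \ref{usingconj}, a \emph{conditional} reduction. Your proposal is likewise not a proof, and you say so honestly, so the right comparison is between the two reductions. Your first two steps are correct and essentially reproduce the paper's content: the identification $\operatorname{Bs}|K_{C_d}|=\{D:\ h^0(\SO_C(D))\ge 2\}$ is exactly Proposition \ref{baselocus} (your determinantal argument via the minors of $(\omega_i(p_j))$ is a concrete version of the paper's second proof, which identifies the evaluation map with $\wedge^d\gamma$ for $\gamma\colon H^0(K_C)\to H^0(K_C|_z)$), and the Kawatani--Okawa support dichotomy plus connectedness of $C_d\setminus C^1_d$ is the mechanism behind their Corollary 1.3, which the paper invokes as Theorem \ref{KO-finite}. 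Your reduction of the conjecture to ``no nonzero admissible subcategory of $D(C_d)$ supported on $C^1_d$'' is a genuine and correct intermediate statement, and your diagnosis that the hyperelliptic/low-gonality case is the hard one (where $C^1_d$ is a divisor) is consistent with the paper's hands-on treatment of $d=2$ via Theorem \ref{KO-surface}, where the hyperelliptic curve $\Gamma$ with $\Gamma^2=1-g$ is exactly the divisorial base locus that must be handled by a negative-definiteness condition.

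The paper's conditional route is different and worth noting: Lemma \ref{usingconj} observes that $K_{C_d}=(g-d-1)x+\theta$ is nef for $d\le g-1$ ($x$ ample, $\theta$ nef) and has nonzero sections, so the folklore Conjecture \ref{conjecture} (nef canonical with $h^0(K_X)>0$ implies no nontrivial SOD) would settle the whole range at once. That reduction trades your geometric analysis of the base locus for a single general-purpose conjecture; yours stays closer to the geometry of $C_d$ and isolates a more specific (and possibly more tractable) obstruction, at the price of having to control admissible subcategories supported on the Brill--Noether locus. Both are legitimate; neither closes the problem. One small caution: your proposed use of invariance of semi-orthogonal decompositions under the action of $\Pic^0(C_d)\cong J$ needs a precise citation or proof of the relevant rigidity statement before it can carry weight, and your induction ``degenerating objects supported on $C^1_d$ onto smaller symmetric products'' is at present only a slogan --- $C^1_d$ does not literally sit inside $C_{d'}$, and you would need an actual functor or correspondence implementing the reduction.
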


In this direction, we prove some results on the base locus of the canonical divisor $K_{C_d}$ of the symmetric product $C_d$.

\begin{proposition}[Proposition \ref{baselocus}]
Let $1\,\leq \,d\,\leq\, g-1$. The base locus of the canonical divisor $K_{C_d}$
is the set of points $(x_1,\,\cdots,\,x_d)$ in $C_d$ such that $h^0(\SO_C(x_1+\cdots+x_d))\,>\,1$.

Equivalently, the base locus is the set of points in $C_d$ where the Albanese map is not injective. 
\end{proposition}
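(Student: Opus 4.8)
The plan is to make the canonical sections completely explicit and then read off their common zeros. I would first invoke the classical identification $H^0(C_d,K_{C_d}) \,\cong\, \bigwedge^d H^0(C,K_C)$ (due to Macdonald), realized as follows: given $\eta_1,\ldots,\eta_d \,\in\, H^0(C,K_C)$, the $S_d$-invariant $d$-form
\[
\Omega(\eta_1,\ldots,\eta_d)\,:=\,\sum_{\sigma\in S_d} p_{\sigma(1)}^*\eta_1 \wedge \cdots \wedge p_{\sigma(d)}^*\eta_d
\]
on $C^d$ descends to a holomorphic $d$-form on $C_d$ that is alternating in the $\eta_i$; taking the $\eta_i$ from a basis $\omega_1,\ldots,\omega_g$ of $H^0(C,K_C)$ gives a basis $\{\Omega(\omega_{i_1},\ldots,\omega_{i_d})\}_{i_1<\cdots<i_d}$ of $H^0(K_{C_d})$, a space of dimension $\binom{g}{d}$. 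The key point I would stress is the local formula: near a reduced divisor $D \,=\, x_1+\cdots+x_d$, with local coordinate $t_j$ at $x_j$ and $\eta_i = f_i\,dt$, one computes $\Omega(\eta_1,\ldots,\eta_d) \,=\, \det\!\big(f_i(x_j)\big)\, dt_1\wedge\cdots\wedge dt_d$, while at a non-reduced divisor the determinant degenerates to a generalized Wronskian in the $f_i$ and their derivatives.

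From here the base locus is essentially a rank condition. Trivializing $K_{C_d}$ at a point $[D]$ by the local generator $dt_1\wedge\cdots\wedge dt_d$, the value of $\Omega(\omega_{i_1},\ldots,\omega_{i_d})$ at $[D]$ is, up to a nonzero factor common to all the sections, the $d\times d$ minor on rows $i_1,\ldots,i_d$ of the matrix of the restriction (jet-evaluation) map
\[
r_D \,\colon\, H^0(C,K_C)\,\longrightarrow\, H^0\!\big(C,\,K_C\otimes \SO_D\big)\,\cong\,\CC^d .
\]
Hence $[D]$ lies in the base locus precisely when all these minors vanish, i.e. when $\rk(r_D)\,<\,d$, equivalently when $r_D$ fails to be surjective. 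The delicate point here — and the step I expect to be the main obstacle — is the non-reduced case: I must verify that replacing distinct points by a length-$d$ subscheme turns $\det(f_i(x_j))$ into exactly the determinant of $r_D$ written in a jet basis (so the Wronskian reading is correct), and that the ``common nonzero factor'' arising from the coordinate choices is genuinely the same for every section, so that membership in the base locus is independent of all the trivializations.

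The remaining computation is routine Serre duality and Riemann--Roch. From the exact sequence $0 \to K_C(-D) \to K_C \to K_C\otimes\SO_D \to 0$ one gets $\rk(r_D) = g - h^0(K_C(-D))$, and Serre duality gives $h^0(K_C(-D)) = h^1(\SO_C(D))$, so Riemann--Roch yields
\[
\rk(r_D)\,=\,g - h^1(\SO_C(D))\,=\,d + 1 - h^0(\SO_C(D)).
\]
Since $D$ is effective we have $h^0(\SO_C(D))\ge 1$, and $\rk(r_D) < d$ holds if and only if $h^0(\SO_C(D)) > 1$. This identifies the base locus with the set of points $[D]$ for which $h^0(\SO_C(x_1+\cdots+x_d)) > 1$, as claimed.

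Finally, for the Albanese reformulation I would use that the Albanese map of $C_d$ is the Abel--Jacobi map $u \colon C_d \to \Pic^d(C)$, $[D]\mapsto [\SO_C(D)]$, whose fiber over $[\SO_C(D)]$ is the complete linear system $|D| = \PP\big(H^0(\SO_C(D))\big)$, of dimension $h^0(\SO_C(D))-1$. Thus $u$ is injective at $[D]$ exactly when this fiber is a single point, that is, when $h^0(\SO_C(D)) = 1$; the locus where $u$ is not injective therefore coincides with the base locus computed above.
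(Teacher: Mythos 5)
Your proposal is correct and follows essentially the paper's second proof of this proposition: both arguments identify the evaluation map of $K_{C_d}$ at $[D]$ with the $d$-th exterior power of the restriction map $H^0(C,K_C)\to H^0(K_C|_D)$, so that $[D]$ is a base point exactly when that map fails to be surjective, which translates into $h^0(\SO_C(D))>1$, and both read off the Albanese reformulation from the fiber description $u^{-1}(u(D))=\PP H^0(\SO_C(D))$. The only differences are cosmetic --- you use explicit local determinants/Wronskians and Riemann--Roch where the paper uses the canonical identification $K_{C_d}|_z=\bigwedge^d H^0(K_C|_z)$ and the dual exact sequence for $\SO_C(z)$ --- and the coordinate-independence issue you flag in the non-reduced case evaporates if you invoke that canonical identification instead of local trivializations.
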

 
The following conjecture is known to the experts. 

\begin{conjecture}
\label{conjecture}
Let $X$ be a smooth projective variety. If the canonical bundle $K_X$ is
nef and $h^0(K_X)\,>\,0$, then $X$ admits no non-trivial semi-orthogonal decomposition.
\end{conjecture}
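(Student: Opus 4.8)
The plan is to follow the circle of ideas of Kawatani--Okawa relating semi-orthogonal decompositions to sections of the canonical bundle, using the section $s \in H^0(X,\omega_X)$ supplied by $h^0(K_X)>0$ to detect, point by point, which component of a hypothetical decomposition a skyscraper sheaf lives in. Suppose for contradiction that $D(X)=\langle \SA,\SB\rangle$ is a non-trivial semi-orthogonal decomposition, and write $S_X=(-)\otimes\omega_X[\dim X]$ for the Serre functor. For a closed point $x\in X$ let $k(x)$ be the corresponding skyscraper sheaf, with canonical triangle $B\to k(x)\to A\to B[1]$, where $A\in\SA$ and $B\in\SB$.

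First I would establish the key local statement: \emph{if $x$ does not lie in the base locus of $|K_X|$, then $k(x)$ lies entirely in $\SA$ or entirely in $\SB$}. Choose a section $s$ with $s(x)\neq 0$ and set $D=\mathrm{div}(s)$, so that $x\notin D$. Multiplication by $s$ is a natural transformation $\mathrm{id}\to(-)\otimes\omega_X$ whose cone on an object $E$ is supported on $D\cap\mathrm{Supp}(E)$; the crux is that, since $x\notin D$, the twist $(-)\otimes\omega_X$ can be arranged to act as the identity on the objects relevant to $k(x)$, so that $S_X$ behaves there like the shift $[\dim X]$. Granting this, Serre duality gives $\Hom^\bullet(A,B)\cong\Hom^\bullet(B,S_X A)^*\cong\Hom^\bullet(B,A[\dim X])^*=0$, because $\Hom^\bullet(\SB,\SA)=0$; as also $\Hom^\bullet(B,A)=0$, the connecting map vanishes, the triangle splits, and $k(x)\cong A\oplus B$. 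Since $k(x)$ is indecomposable, one summand is zero, proving the claim.

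Next I would globalize. Realizing the projection functors $\mathrm{pr}_\SA,\mathrm{pr}_\SB$ as Fourier--Mukai functors (the admissible components carry kernels), the loci $X_\SA=\{x:\mathrm{pr}_\SB\,k(x)=0\}$ and $X_\SB=\{x:\mathrm{pr}_\SA\,k(x)=0\}$ are open, since the vanishing of the derived restriction of a perfect complex is an open condition. By the local statement these two open sets partition $X\setminus\mathrm{Bs}\,|K_X|$, which is connected because $X$ is irreducible and the base locus is a proper closed subset. Hence one of them, say $X_\SB$, is empty, so every skyscraper $k(x)$ with $x\notin\mathrm{Bs}\,|K_X|$ belongs to $\SA$.

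The main obstacle is passing from this to $\SA=D(X)$, that is, controlling the base locus. When $|K_X|$ is base-point free the skyscrapers exhaust $X$, they generate $D(X)$, and since $\SA$ is thick and admissible one concludes $\SA=D(X)$ and $\SB=0$; this recovers the base-point-free case unconditionally. In general, however, objects supported on $\mathrm{Bs}\,|K_X|$ are not generated by the skyscrapers away from it, and one must still prove that they too lie in $\SA$. This is precisely where the nef hypothesis on $K_X$ should enter: nefness is expected to constrain the geometry of the base locus sufficiently to propagate membership in $\SA$ across it. Making this final step rigorous, instead of assuming $|K_X|$ base-point free as in the Kawatani--Okawa theorem, is the heart of the difficulty and the reason the assertion is stated only as a conjecture in this generality.
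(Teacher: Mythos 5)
The statement you were asked to prove is not proved in the paper: it is stated as Conjecture \ref{conjecture} (``known to the experts'') and is only ever used as a hypothesis, in Lemma \ref{usingconj}, to deduce Conjecture \ref{conjecture for symmetric power}. So there is no proof in the paper to compare against, and your proposal does not supply one either --- as you yourself concede in your last paragraph. The genuine gap is exactly where you locate it: the Kawatani--Okawa mechanism (a nonzero section $s$ of $\omega_X$ forces each skyscraper $k(x)$ with $x\notin \mathrm{Bs}\,|K_X|$ into a single component, and connectedness of $X\setminus \mathrm{Bs}\,|K_X|$ puts them all into the same component, say $\SA$) only controls objects whose support meets the complement of the base locus. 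It says nothing about an admissible subcategory $\SB$ supported entirely inside $\mathrm{Bs}\,|K_X|$, and nefness of $K_X$ is not known to rule such a $\SB$ out. This is precisely why \cite{KO} must assume the base locus is finite (Theorem \ref{KO-finite}) or, for surfaces, that the one-dimensional components of the base locus have negative definite intersection matrix (Theorem \ref{KO-surface}), and why the general statement remains open.

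Two smaller cautions about the part you do sketch. First, the ``crux'' step --- that away from $\mathrm{div}(s)$ the Serre functor ``behaves like the shift $[\dim X]$'' so that $\Hom^\bullet(A,B)=0$ --- is the actual technical content of \cite{KO} and cannot be waved through with ``can be arranged''; their argument runs through the morphism $k(x)\to k(x)\otimes\omega_X$ induced by $s$, which is an isomorphism exactly when $s(x)\neq 0$, followed by a careful comparison of the decomposition triangles of $k(x)$ and $S_X(k(x))$. Second, your openness claim for the loci $X_\SA$ and $X_\SB$ presupposes that the projection functors are represented by kernels and that vanishing of the derived restriction is an open condition; both are true (admissibility in $D(X)$ for $X$ smooth projective yields Fourier--Mukai kernels, and semicontinuity gives openness) but need to be invoked explicitly. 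Neither of these is the fatal issue; the fatal issue is the base locus, and the honest conclusion is that your argument reproves the base-point-free case of Theorem \ref{KO-finite}, not Conjecture \ref{conjecture}.
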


In Lemma \ref{usingconj} we prove that Conjecture \ref{conjecture} implies Conjecture \ref{conjecture for symmetric power}

\section{Nefness of the canonical divisor of $C_d$}

Let $\Theta$ be the theta divisor on the Jacobian $J(C)$.
Fixing a point $p\in C$, the Albanese map of the symmetric
product $C_d$ is constructed as follows
$$
u\,:\,C_d \,\too\, J(C)\, ,\ \ D \,\longmapsto\, \SO_C(D-dp)\, .
$$
Note that the fiber of the Albanese map is
\begin{equation}\label{fiberalbanese}
u^{-1}(u(D))\,=\,\PP H^0(\SO_C(D))\, .
\end{equation}
We also define
$$
i\,:\,C_{d-1} \too C_{d}\, ,\ \ D\,\longmapsto\, D+p\, .
$$
Let $\theta\,:=\,u^* \Theta$. The class of the divisor
$i(C_{d-1})$ of $C_d$ will be denoted by $x$.

\begin{lemma}
\label{lemma:canonicald}
The canonical class of the symmetric product $C_d$ is given by the formula
\begin{equation}\label{eq:canonicald}
K_{C_d}\,=\, (g-d-1) x + \theta\, .
\end{equation}
\end{lemma}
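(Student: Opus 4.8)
The plan is to compute the canonical class $K_{C_d}$ by relating it to known cohomological data on the symmetric product and expressing the answer in terms of the natural generators $x$ and $\theta$ of the relevant part of $\operatorname{Pic}(C_d)$. The standard approach uses the fact that the cohomology and Picard group of $C_d$ are well understood classically (via Macdonald's work), so the strategy is to identify $K_{C_d}$ as an element of the group generated by $x$ and $\theta$ and then pin down the two coefficients.

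First I would recall the geometry of the two natural divisor classes. The class $x$ is represented by the image $i(C_{d-1})$ of the inclusion $D \mapsto D+p$; equivalently, $x$ is the class of the locus of divisors containing a fixed point $p$, and $\mathcal{O}_{C_d}(x)$ is the standard very ample-type line bundle whose sections cut out such loci. The class $\theta = u^*\Theta$ is the pullback of the theta divisor under the Albanese map. Both classes are well defined in $H^2(C_d,\mathbb{Z})$, and it is classical that $K_{C_d}$ lies in the subgroup they generate, so we may write $K_{C_d} = a\,x + b\,\theta$ for integers $a,b$ to be determined.

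To determine the two coefficients I would use two independent test computations. The cleanest method is to pull everything back along, or restrict to, well-chosen subvarieties where the intersection numbers are computable. One natural choice is to use the fibration structure: when $d$ is small relative to $g$ the generic fiber of $u$ is a point, but for the universal computation one can work with the map $C_{d-1} \xrightarrow{i} C_d$ and apply adjunction, since $i(C_{d-1})$ is a smooth divisor in the class $x$. Adjunction gives $K_{C_{d-1}} = (K_{C_d} + x)|_{i(C_{d-1})}$, and computing how $x$ and $\theta$ restrict to $i(C_{d-1})$ yields a recursion in $d$ relating the coefficients for consecutive symmetric products. Combined with the base case and with the classical value of $\theta$ restricted to the Abel--Jacobi image, this recursion determines $(a,b) = (g-d-1,\,1)$, matching \eqref{eq:canonicald}.

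The main obstacle is bookkeeping the restriction of the two classes under $i$ and correctly handling the self-intersection term $x|_{i(C_{d-1})}$, which involves the normal bundle of $C_{d-1}$ in $C_d$. This normal bundle is the restriction to $C_{d-1}$ of $\mathcal{O}_{C_d}(x)$, and identifying it precisely (it is related to the tautological class on the universal divisor) is where the real content lies; the rest reduces to tracking how $\theta = u^*\Theta$ behaves under the shift $D \mapsto D+p$, which changes the Albanese map by a translation and hence leaves $\theta$ essentially unchanged up to classes that vanish in the relevant recursion. Once the normal bundle computation is in hand, substituting into the adjunction formula and solving the recursion is routine, so I expect the entire argument to hinge on that single normal-bundle identification.
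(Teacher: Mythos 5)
Your strategy (adjunction along the divisor $i(C_{d-1})\subset C_d$ in class $x$, yielding a recursion in $d$) is a genuinely different route from the paper's, which simply combines two formulas of Kouvidakis: $K_{C_d}=(2g-2)x-\Delta'/2$, coming from Riemann--Hurwitz for the quotient $C^d\to C_d$, and $-\Delta'/2=\theta-(d+g-1)x$, which expresses the (halved) discriminant divisor in terms of $x$ and $\theta$. Your individual ingredients are correct: $i(C_{d-1})$ is a smooth divisor, its normal bundle is numerically $x_{d-1}$ (this can be checked by pulling back to $C^{d}$, or by deforming $p$ and using $X_p\cdot X_q\cong C_{d-2}$), and $\theta_d$ restricts to $\theta_{d-1}$ because the Albanese map of $C_d$ restricted to $p+C_{d-1}$ \emph{is} the Albanese map of $C_{d-1}$. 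Since $x$ and $\theta$ are independent in $\operatorname{NS}(C_{d-1})\otimes\QQ$ for $d-1\geq 2$, the recursion correctly gives $b_{d-1}=b_d$ and $a_{d-1}=a_d+1$.

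The gap is in the endgame. The recursion bottoms out at $C_1=C$, where $\operatorname{NS}(C)=\ZZ$ and $\theta_1\equiv g\,x_1$, so the base case $K_C=(2g-2)x_1$ yields only the single linear relation $a_2+g\,b_2=2g-3$. The ``classical value of $\theta$ restricted to the Abel--Jacobi image'' is exactly the identity $\theta|_C\equiv g\,x_1$ already used here, so it supplies nothing new: the pair $(a_2,b_2)=(2g-3,0)$ satisfies everything you have written down, as does the correct $(g-3,1)$. You need one more independent input to separate the $x$- and $\theta$-coefficients. Possible fixes: (i) compute $K_{C_2}^2$ via the double cover $\pi\colon C\times C\too C_2$ using $\pi^*K_{C_2}=K_C\boxtimes K_C(-\Delta)$ (as in Lemma \ref{lemma2} of the paper), which together with the linear relation and an effectivity/nefness argument pins down $(a_2,b_2)=(g-3,1)$; or (ii) identify the class of the discriminant divisor $\Delta'$ in terms of $x$ and $\theta$ and use Riemann--Hurwitz directly --- but that identification is precisely the content of the Kouvidakis formula the paper cites, so at that point the recursion becomes unnecessary. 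As written, the proposal does not determine the coefficients.
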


\begin{proof}
Let $\Delta\, \subset\, C^d$ be the big diagonal where at least two points coincide. The image of
$\Delta$ under the quotient map $\pi\, :\, C^d\, \longrightarrow\, C_d$, for the action of the
symmetric group, will be denoted by $\Delta'$, so we have the commutative diagram
$$
\xymatrix{
{\Delta} \ar[r] \ar[d]_{\pi|_\Delta} & {C^d} \ar[d]^{\pi}\\
{\Delta'} \ar[r] & {C_d}
}
$$
Note that $\pi^*(\Delta')\,=\,2\Delta$, and hence $\Delta$ is
the ramification divisor.
The divisor $\Delta'$ is divisible by $2$; in fact,
$$
K_{C_d}\,=\, (2g-2)x - \Delta'/2
$$
\cite[Proposition 2.6]{K2}. On the other hand,
$$
-\Delta'/2 \,=\, \theta - (d+g-1)x
$$
\cite[Lemma 7]{K1}. The lemma follows from these two facts.
\end{proof}

Recall the formula of Macdonald \cite[\S~11]{Ma}
\begin{equation}\label{eq:kcd}
H^0(C_d,\, K_{C_d})\, =\, \bigwedge\nolimits^d H^0(C,\, K_C)
\end{equation}

\begin{lemma}
\label{usingconj}
If Conjecture \ref{conjecture} holds, then for any curve $C$
with $g\geq 3$ and $1< d < g$, the symmetric product
$C_d$ admits no non-trivial semi-orthogonal decomposition.
\end{lemma}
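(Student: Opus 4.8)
To prove Lemma \ref{usingconj}, the plan is to apply Conjecture \ref{conjecture} directly to the symmetric product $C_d$, which requires verifying its two hypotheses: that $K_{C_d}$ is nef and that $h^0(K_{C_d}) > 0$. The second hypothesis is immediate from Macdonald's formula \eqref{eq:kcd}, since $H^0(C_d, K_{C_d}) = \bigwedge^d H^0(C, K_C)$ has dimension $\binom{g}{d}$, which is strictly positive precisely when $d \leq g$; under our hypothesis $d < g$ this is certainly nonzero. So the real content lies in establishing nefness.

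For nefness, I would use the explicit formula $K_{C_d} = (g-d-1)x + \theta$ from Lemma \ref{lemma:canonicald}. Since $\theta = u^*\Theta$ is the pullback of the theta divisor under the Albanese map $u$, it is nef because $\Theta$ is ample on the Jacobian and pullbacks of nef (indeed ample) classes are nef. The class $x$, being represented by the effective divisor $i(C_{d-1})$, is likewise nef in the relevant range: one should check that $x$ intersects every irreducible curve in $C_d$ nonnegatively. Under the standing hypothesis $1 < d < g$, the coefficient $g-d-1$ is nonnegative (indeed $g - d - 1 \geq 0$ since $d \leq g-1$), so $K_{C_d}$ is expressed as a nonnegative combination of nef classes, hence nef. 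The boundary case $d = g-1$ gives coefficient zero on $x$, leaving $K_{C_d} = \theta$, which is still nef.

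The main obstacle is confirming that $x$ is genuinely nef, rather than merely effective. An effective divisor need not be nef, so one must verify $x \cdot \gamma \geq 0$ for every irreducible curve $\gamma \subset C_d$; the only curves that could cause trouble are those contracted or negatively met by the embedding $i$, and these must be analyzed via the intersection theory on $C_d$ developed by Macdonald and the standard relations between $x$ and $\theta$. Once nefness of both $x$ and $\theta$ is in hand, the lemma follows formally: Conjecture \ref{conjecture} applied to the smooth projective variety $X = C_d$ yields the absence of a non-trivial semi-orthogonal decomposition. Finally, I would note that the genus restriction $g \geq 3$ together with $1 < d < g$ guarantees that we stay in the range where the coefficient $g-d-1 \geq 0$, so no sign issue arises and the conclusion holds uniformly across all admissible $d$.
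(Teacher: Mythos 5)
Your approach is the same as the paper's: verify the two hypotheses of Conjecture \ref{conjecture} for $X=C_d$, getting $h^0(K_{C_d})>0$ from Macdonald's formula \eqref{eq:kcd} and nefness from the expression $K_{C_d}=(g-d-1)x+\theta$ of Lemma \ref{lemma:canonicald} together with $g-d-1\geq 0$ and the nefness of $\theta=u^*\Theta$. All of that matches the paper.

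However, you explicitly leave open the one step that carries any content, namely that $x$ is nef: you call it ``the main obstacle'' and defer it to an unspecified analysis of curves met negatively by $i(C_{d-1})$, which as written is a genuine gap. The paper closes this by asserting the standard fact that $x$ is in fact \emph{ample} (not merely nef), so no curve-by-curve intersection analysis is needed. If you want a self-contained justification: the quotient map $\pi\colon C^d\to C_d$ is finite and surjective, and $\pi^*\mathcal{O}_{C_d}(x)\cong \mathcal{O}_C(p)\boxtimes\cdots\boxtimes\mathcal{O}_C(p)$, which is ample on $C^d$; since ampleness descends along finite surjective morphisms, $\mathcal{O}_{C_d}(x)$ is ample. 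With that inserted, your argument coincides with the paper's and is complete.
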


\begin{proof}
The class $\theta$ is nef, being the pullback of an ample class, while
the class $x$ is ample, hence $K_{C_d}\,=\, (g-d-1) x + \theta$ is nef
under the given conditions on $d$.
Furthermore $H^0(C_d,\, K_{C_d})\,=\,\bigwedge^d H^0(C,\, K_C)\,\neq\, 0$ \eqref{eq:kcd}, 
so all the conditions in Conjecture \ref{conjecture} are satisfied.
\end{proof}

\section{Base locus of canonical divisor of $C_d$}

Let $C$ be a smooth projective curve of genus $g$. Take any positive integer
$d\, \leq\, g-1$. In this section we prove that the base locus of the
canonical line bundle $K_{C_d}$ of the symmetric product $C_d$
coincides with the locus where the Albanese map is not injective
(Proposition \ref{baselocus}).
We give two independent proofs. The
first proof is a combination of Proposition \ref{prop1} and Lemma
\ref{lem1}. The 
second proof is given after the statement of Proposition \ref{baselocus}.

When we write a point of $C_d$ as
$z=(z_1,\, \cdots, \, z_d)$, the points $z_i$ of $C$ need not be distinct.
We also denote by $z$ the subscheme of $C$ defined by $\sum z_i$.

\begin{proposition}\label{prop1}
Let $1\,\leq \,d\,\leq\, g-1$. 
Let $z\,=\, (z_1,\, \cdots, \, z_d)\, \in\, C_d$ be a point of the base locus
of the complete linear system $|K_{C_d}|\,=\, \PP(H^0(C_d,\, K_{C_d}))$. Then the dimension of
$$
H^0(C,\, {\mathcal O}_C(z))\,=\, H^0(C,\, {\mathcal O}_C(\sum_{i=1}^d z_i))
$$
is at least two.
\end{proposition}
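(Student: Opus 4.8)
The plan is to make the isomorphism \eqref{eq:kcd} explicit at the level of fibres, so that the condition ``$z$ lies in the base locus of $|K_{C_d}|$'' becomes a rank condition on an evaluation map, which Riemann--Roch then turns into the assertion $h^0(C,\,\SO_C(z))\,\geq\, 2$. Fix a basis $\omega_1,\,\ldots,\,\omega_g$ of $H^0(C,\,K_C)$; then, by \eqref{eq:kcd}, the wedges $\omega_{i_1}\wedge\cdots\wedge\omega_{i_d}$ with $1\,\leq\, i_1\,<\,\cdots\,<\,i_d\,\leq\, g$ form a basis of $H^0(C_d,\,K_{C_d})$. Writing $\mathrm{pr}_j\,:\,C^d\,\to\, C$ for the $j$-th projection and $\pi\,:\,C^d\,\to\, C_d$ for the quotient map, the section $\omega_{i_1}\wedge\cdots\wedge\omega_{i_d}$ lifts, along $\pi$, to the anti-invariant top form $\sum_{\sigma\in S_d}\op{sgn}(\sigma)\,\mathrm{pr}_1^*\omega_{i_{\sigma(1)}}\otimes\cdots\otimes\mathrm{pr}_d^*\omega_{i_{\sigma(d)}}$ on $C^d$. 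Since $\pi$ is surjective, $z$ lies in the base locus precisely when every $s\in H^0(C_d,\,K_{C_d})$ vanishes at $z$, a condition I will read off from these lifts at a preimage $\tilde z\,=\,(z_1,\,\ldots,\,z_d)$.

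First I would treat the case where $z_1,\,\ldots,\,z_d$ are pairwise distinct. Choosing a local coordinate $t$ near each $z_j$ and writing $\omega_i\,=\, f_i\,dt$ locally, the displayed lift evaluates at $\tilde z$ to $\det\!\big(f_{i_k}(z_j)\big)_{k,j}\;dt_1\wedge\cdots\wedge dt_d$, a $d\times d$ minor of the $g\times d$ matrix $M(z)\,=\,\big(f_i(z_j)\big)$. Away from the big diagonal $\pi$ is étale, so $z$ lies in the base locus exactly when all these minors vanish, that is, when $\rk M(z)\,\leq\, d-1$. Now $M(z)$ is the matrix of the evaluation map $\mathrm{ev}_z\,:\,H^0(C,\,K_C)\,\to\,\bigoplus_{j=1}^d (K_C)_{z_j}$, $\omega\,\mapsto\,(\omega(z_1),\,\ldots,\,\omega(z_d))$, whose kernel is $H^0(C,\,K_C(-z))$; hence $\rk M(z)\,=\, g-h^0(C,\,K_C(-z))$.

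The hard part will be the case of coincident coordinates, where $\pi$ ramifies and every anti-invariant lift vanishes identically along $\Delta$, so the naive evaluation is identically zero and carries no information. Here I would use that $\pi^*K_{C_d}\,=\, K_{C^d}(-\Delta)$: the lift $\tilde s$ of a section $s$ equals $(\pi^*s)\cdot\delta$, where $\delta$ is the canonical section cutting out $\Delta$, so the vanishing of $s$ at $z$ is detected by $\tilde s/\delta$ rather than by $\tilde s$ itself. Dividing the determinant $\det(f_{i_k}(t_j))$ by the local equation of each diagonal and passing to the limit replaces each repeated column by successive derivatives of the $f_{i_k}$, producing a confluent Wronskian-type determinant. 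The upshot should be that $M(z)$ is replaced by the matrix of the jet evaluation map $H^0(C,\,K_C)\,\to\, H^0(C,\,K_C\otimes\SO_z)$, where $\SO_z$ is the structure sheaf of the length-$d$ subscheme $z\,=\,\sum_i z_i$; the target again has dimension $d$, the kernel is again $H^0(C,\,K_C(-z))$, and therefore the base-locus condition is once more $\rk\,\leq\, d-1$, equivalently $h^0(C,\,K_C(-z))\,\geq\, g-d+1$.

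Finally I would conclude by Riemann--Roch and Serre duality: since $h^0(C,\,\SO_C(z))-h^0(C,\,K_C(-z))\,=\, \deg z + 1 - g\,=\, d+1-g$, the inequality $h^0(C,\,K_C(-z))\,\geq\, g-d+1$ yields $h^0(C,\,\SO_C(z))\,\geq\, 2$, as claimed. The only genuinely delicate point is the confluent-determinant bookkeeping along $\Delta$; everything else reduces to the distinct-points computation together with Riemann--Roch.
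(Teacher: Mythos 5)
Your argument is correct, and it reaches the same pivot as the paper --- namely that $z$ being a base point forces $h^0(C,\,K_C(-z))\,\geq\, g-d+1$, after which Riemann--Roch/Serre duality gives $h^0(C,\,\SO_C(z))\,\geq\,2$ --- but it gets there by a different mechanism than the paper's designated proof of Proposition \ref{prop1}. The paper's first proof stays global: it uses the decomposable sections $\bigwedge\nolimits^d S$ (the divisors $D_S$, cited from Bastianelli), and if $h^0(K_C(-z))$ were exactly $g-d$ it picks $S$ complementary to $H^0(K_C(-z))$ inside $H^0(K_C)$ so that $\gamma\vert_S$ in \eqref{ebeta} is injective, contradicting the base-point condition; no local computation on $C^d$ is needed. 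Your route --- identifying the evaluation map $H^0(C_d,K_{C_d})\to K_{C_d}\vert_z$ with $\wedge^d$ of the jet-evaluation $\gamma:H^0(K_C)\to H^0(K_C\vert_z)$, so that base point $\Leftrightarrow$ $\wedge^d\gamma=0$ $\Leftrightarrow$ $\rk\gamma\leq d-1$ --- is in substance the paper's \emph{second} proof of Proposition \ref{baselocus}, except that the paper obtains the identification \eqref{eq:fiberkcd} canonically (via $T_zC_d=\Hom(\SO_C(-z),\SO_z)$ and the differential of the Albanese map), whereas you propose to verify it by hand through anti-invariant lifts to $C^d$, division by the equation of $\Delta$, and confluent Wronskians. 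That computation does work (for $d=2$, dividing $f_i(t_1)f_j(t_2)-f_j(t_1)f_i(t_2)$ by $t_1-t_2$ and restricting to the diagonal yields $f_i f_j'-f_jf_i'$, the expected $2\times 2$ jet minor, and the general case is the standard confluent Vandermonde/Wronskian degeneration), but it is exactly the bookkeeping the paper avoids, either by citing Bastianelli for the $D_S$ or by the intrinsic tangent-space description; if you write it up you should prove, not just assert, that the anti-symmetrization $\sum_\sigma \op{sgn}(\sigma)\,\mathrm{pr}_1^*\omega_{i_{\sigma(1)}}\otimes\cdots\otimes\mathrm{pr}_d^*\omega_{i_{\sigma(d)}}$ really is $\delta\cdot\pi^*s$ for the section $s$ corresponding to $\omega_{i_1}\wedge\cdots\wedge\omega_{i_d}$ under \eqref{eq:kcd}. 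Note also that for Proposition \ref{prop1} only the implication ``base point $\Rightarrow$ all minors vanish'' is needed, so the delicate direction of your equivalence is not actually required here.
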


\begin{proof}
We shall first describe a subset of $\PP(H^0(C_d,\, K_{C_d}))$ whose linear span is whole
$\PP(H^0(C_d,\, K_{C_d}))$.

Let $S\, \subset\, H^0(C,\, K_{C})$ be a linear subspace of dimension $d$. Now define
$$
D_S \, :=\, \{(y_1,\, \cdots, \, y_d)\, \in\, C_d\, \mid\, {\rm div}(\omega) - \sum_{i=1}^d y_i
\ \text{~is effective for some~}\ \omega\,\in\, S\setminus\{0\}\}\, .
$$
Note that ${\rm div}(\omega)-\sum_{i=1}^dy_i$ is effective if and only if
$\omega$ vanishes on the subscheme of $C$ defined by $\sum_{i=1}^dy_i$.

We claim that $D_S$ is a divisor on $C_d$ linearly equivalent to $K_{C_d}$
 and moreover the collection
$\{D_S\}_{S\in \text{Gr}(d, H^0(C,\, K_C))}$ spans $H^0(C_d,\, K_{C_d})$.

To prove this, using \eqref{eq:kcd} the above divisor $D_S$ corresponds to the line
$$
\bigwedge\nolimits^d S\, \subset\, \bigwedge\nolimits^d H^0(C,\, K_C)\,=\, H^0(C_d,\, K_{C_d})\, .
$$
The collection of all such lines with $S$ running over $\text{Gr}(d, H^0(C,\, K_{C}))$
evidently spans $\bigwedge^d H^0(C,\, K_C)$, proving the claim
(for more details, see \cite[Lemma 1.5.4]{Ba}).

As in the statement of the proposition, take a point $z\,=\, (z_1,\, \cdots, \, z_d)\, \in\, 
C_d$ of the base locus of $\PP(H^0(C_d,\, K_{C_d}))$. Note that this means that for every linear 
subspace $S\, \subset\, H^0(C,\, K_{C})$ of dimension $d$, there is a nonzero $\omega\, 
\in\, S$ such that ${\rm div}(\omega) - \sum_{i=1}^d z_i$ is effective. We shall now interpret
this condition in order to be able to use it. Consider the short exact sequence
of sheaves
\begin{equation}\label{sh0}
0\, \longrightarrow\, K_C\otimes {\mathcal O}_C(-z)
\, \longrightarrow\, K_C \, \longrightarrow\, K_C\vert_z \, \longrightarrow\, 0
\end{equation}
on $C$. Let
\begin{equation}\label{ebeta}
0\, \longrightarrow\, H^0(C,\, K_C\otimes {\mathcal O}_C(-z))
\, \stackrel{\beta}{\longrightarrow}\, H^0(C,\, K_C) \, \stackrel{\gamma}{\longrightarrow}\,
H^0(K_C\vert_z)
\end{equation}
be the long exact sequence of cohomologies associated to \eqref{sh0}. This implies that
$$
\dim \beta(H^0(C,\, K_C\otimes {\mathcal O}_C(-z)))\, \geq\, g-d\, ,
$$
because $\dim H^0(K_C\vert_z)\,=\, d$.

We shall show that
\begin{equation}\label{aci}
\dim \beta(H^0(C,\, K_C\otimes {\mathcal O}_C(-z)))\, \geq\, g-d+1\, .
\end{equation}

To prove this, if $\dim \beta(H^0(C,\, K_C\otimes {\mathcal O}_C(-z)))\, =\, g-d$, then take a subspace
of dimension $d$
$$
S\, \subset\, H^0(C,\, K_C)
$$
which is complementary to the subspace
$\beta(H^0(C,\, K_C\otimes {\mathcal O}_C(-z)))$ of $H^0(C,\, K_C)$. Then the restriction
$\gamma\vert_S$,
where $\gamma$ is the homomorphism in \eqref{ebeta}, is injective. Therefore,
there is no nonzero $\omega\, \in\, S$ such that ${\rm div}(\omega) - \sum_{i=1}^d z_i$ is effective,
because such an element $\omega$ has to be in the kernel of $\gamma\vert_S$. This proves
\eqref{aci}.

From \eqref{aci} and \eqref{ebeta} it follows immediately that the homomorphism $\gamma$ in \eqref{ebeta} is 
\textit{not} surjective.

Now consider the short exact sequence of sheaves
$$
0\, \longrightarrow\, {\mathcal O}_C\, \longrightarrow\, {\mathcal O}_C(z)\, \longrightarrow\, {\mathcal O}_C(z)\vert_z\, \longrightarrow\, 0
$$
on $C$. Let
\begin{equation}\label{ep0}
0\, \longrightarrow\, H^0(C,\, {\mathcal O}_C) \, \longrightarrow\, H^0(C,\, {\mathcal O}_C(z))
\, \stackrel{\eta}{\longrightarrow}\, H^0({\mathcal O}_C(z)\vert_z) \, \stackrel{\phi}{\longrightarrow}\,
H^1(C,\, {\mathcal O}_C)
\end{equation}
be the corresponding long exact sequence of cohomologies. By Serre duality,
$$
H^1(C,\, {\mathcal O}_C)\,=\, H^0(C,\, K_C)^*\, .
$$
Using this duality, the homomorphism $\phi$ in \eqref{ep0} is the dual of the homomorphism
$\gamma$ in \eqref{ebeta}. We proved earlier that $\gamma$ is not surjective. Consequently,
$\phi$ is not injective. Hence from \eqref{ep0} it follows that
$\dim H^0(C,\, {\mathcal O}_C(z))\, \geq\, 2$. This completes the proof.
\end{proof}

In view of the above proof, the following converse of Proposition \ref{prop1} is now rather straightforward.

\begin{lemma}\label{lem1}
Let $1\,\leq \,d\,\leq\, g-1$. 
Let $z\,=\, (z_1,\, \cdots, \, z_d)\, \in\, C_d$ be a point such that the dimension of
$$
H^0(C,\, {\mathcal O}_C(z))\,=\, H^0(C,\, {\mathcal O}_C(\sum_{i=1}^d z_i))
$$
is at least two. Then $z$ lies on the base locus
of the complete linear system $|K_{C_d}|\,=\, \PP(H^0(C_d,\, K_{C_d}))$.
\end{lemma}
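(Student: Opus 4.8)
The plan is to reuse the family of divisors $D_S$ and the spanning statement established in the proof of Proposition \ref{prop1}, and then reduce the assertion to a single dimension count in $H^0(C,\,K_C)$. Recall from that proof that, as $S$ ranges over $\Gr(d,\,H^0(C,\,K_C))$, the lines $\bigwedge^d S$ span $\bigwedge^d H^0(C,\,K_C)\,=\,H^0(C_d,\,K_{C_d})$. Since vanishing at a fixed point is a linear condition on sections, a point $z$ lies in the base locus of $|K_{C_d}|$ if and only if it lies on every member of this spanning family, i.e.\ $z\,\in\, D_S$ for all $d$-dimensional subspaces $S\,\subset\, H^0(C,\,K_C)$. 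Thus it suffices to show that the hypothesis $\dim H^0(C,\,\mathcal{O}_C(z))\,\geq\,2$ forces $z\,\in\, D_S$ for every such $S$.

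First I would unwind the membership condition. By definition $z\,\in\, D_S$ means there is a nonzero $\omega\,\in\, S$ with ${\rm div}(\omega)-\sum_{i=1}^d z_i$ effective, equivalently $\omega$ vanishes on the subscheme $z$; that is, $\omega$ lies in the subspace $W\,:=\,\beta\bigl(H^0(C,\,K_C\otimes\mathcal{O}_C(-z))\bigr)\,\subset\, H^0(C,\,K_C)$ appearing in \eqref{ebeta}. Hence the condition $z\,\in\, D_S$ is exactly $S\cap W\,\neq\,0$.

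Next I would compute $\dim W$. By Serre duality one has $H^0(C,\,K_C\otimes\mathcal{O}_C(-z))\,\cong\, H^1(C,\,\mathcal{O}_C(z))^*$, and Riemann--Roch gives $h^1(\mathcal{O}_C(z))\,=\,h^0(\mathcal{O}_C(z))-d+g-1$. The hypothesis $h^0(\mathcal{O}_C(z))\,\geq\,2$ therefore yields $\dim W\,\geq\, g-d+1$. Finally, inside $H^0(C,\,K_C)$, which has dimension $g$, any $d$-dimensional subspace $S$ satisfies $\dim(S\cap W)\,\geq\,\dim S+\dim W-g\,\geq\, d+(g-d+1)-g\,=\,1$, so $S\cap W\,\neq\,0$. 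Thus $z\,\in\, D_S$ for every $S$, and by the first paragraph $z$ lies in the base locus, as claimed.

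As the phrasing in the text suggests, there is no genuine obstacle here: the work has already been done in Proposition \ref{prop1}. The only points requiring care are (i) the equivalence ``base point $\Leftrightarrow$ lies on every $D_S$'', which relies on the spanning property rather than on any single divisor, and (ii) the correct bookkeeping in the Serre duality plus Riemann--Roch computation of $\dim W$. The decisive ingredient is then the standard intersection-dimension inequality $\dim(S\cap W)\geq \dim S+\dim W-g$, which converts the numerical hypothesis on $h^0(\mathcal{O}_C(z))$ into the nonvanishing of $S\cap W$.
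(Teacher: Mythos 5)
Your proof is correct and follows essentially the same route as the paper: both reduce, via the spanning family $\{D_S\}$, to showing that every $d$-dimensional $S\subset H^0(C,K_C)$ contains a nonzero section vanishing on $z$, and both establish this by the same dimension count on the kernel of the restriction map $\gamma$. The only cosmetic difference is that you bound $\dim W=\dim\ker\gamma$ directly via Serre duality and Riemann--Roch, whereas the paper deduces the non-surjectivity of $\gamma$ from the non-injectivity of its Serre dual $\phi$ in the exact sequence \eqref{ep0}; the two computations are equivalent.
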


\begin{proof}
Since $\dim H^0(C,\, {\mathcal O}_C(z))\,\geq \, 2$, the homomorphism $\eta$ in \eqref{ep0}
is nonzero. Hence $\phi$ in \eqref{ep0} is not injective. Consequently, the dual homomorphism
$\gamma$ in \eqref{ebeta} is not surjective. Therefore,
$$
\dim \gamma (H^0(C,\, K_C)) \, <\, \dim H^0(K_C\vert_z)\,=\, d\, .
$$
This implies that for any linear subspace $S\, \subset\, H^0(C,\, K_C)$ of dimension $d$,
the restriction $\gamma\vert_S$ is not injective. Now for any nonzero
$\omega \, \in\, \text{kernel}(\gamma\vert_S)$ the divisor ${\rm div}(\omega) - \sum_{i=1}^d y_i$
is effective. Consequently, $z$ lies on the base locus
of the complete linear system $|K_{C_d}|$.
\end{proof}

Let $f\, :\, C\, \longrightarrow\, {\mathbb P}^1$ be a surjective map of degree $d$.
For any $b\, \in\, {\mathbb P}^1$, we have $f^{-1}(b)\, \in\, C_d$, where
$f^{-1}(b)$ is the scheme theoretic inverse image. Therefore, we have
morphism
$$
\widehat{f}\, :\, {\mathbb P}^1\, \longrightarrow\, C_d\, , \ \ b\, \longmapsto\, f^{-1}(b)\, .
$$
This is a morphism, and not just a set theoretic map, because $C_d$ is
the Hilbert scheme $\Hilb^d(C)$ of subschemes of $C$ of dimension $0$ and
length $d$, the graph of $f$ gives a closed subscheme of $\PP^1\times
C$, flat over $\PP^1$, and the morphism $\PP^1\too C_d$ associated to
this subscheme by the universal property of the Hilbert
scheme is precisely $\widehat{f}$.

\begin{corollary}\label{cor1}
The image of the above map $\widehat{f}$ is contained in the base locus
of the complete linear system $|K_{C_d}|$.
\end{corollary}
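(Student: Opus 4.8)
The plan is to reduce the statement directly to Lemma \ref{lem1}, which asserts that any point $z\in C_d$ with $\dim H^0(C,\,\mathcal{O}_C(z))\,\geq\,2$ lies in the base locus of $|K_{C_d}|$. Thus it suffices to show that every point in the image of $\widehat{f}$, namely every fiber divisor $f^{-1}(b)$ with $b\in\mathbb{P}^1$, has this property.

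First I would identify the line bundle attached to the point $\widehat{f}(b)\,=\,f^{-1}(b)\,\in\, C_d$. Writing $[b]$ for the reduced point $b$ on $\mathbb{P}^1$ as a divisor of degree $1$, the scheme-theoretic fiber satisfies $f^{-1}(b)\,=\,f^*[b]$ as divisors on $C$, and hence
$$
\mathcal{O}_C\bigl(f^{-1}(b)\bigr)\,=\,f^*\mathcal{O}_{\mathbb{P}^1}([b])\,=\,f^*\mathcal{O}_{\mathbb{P}^1}(1)\, .
$$
In particular the associated line bundle is independent of $b$, as one expects since all points of $\mathbb{P}^1$ are linearly equivalent.

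Next I would bound $H^0$ from below via pullback. Since $f$ is surjective, hence dominant, the pullback homomorphism
$$
f^*\,:\,H^0(\mathbb{P}^1,\, \mathcal{O}_{\mathbb{P}^1}(1))\,\longrightarrow\, H^0(C,\, f^*\mathcal{O}_{\mathbb{P}^1}(1))\,=\,H^0(C,\, \mathcal{O}_C(f^{-1}(b)))
$$
is injective: a nonzero section on $\mathbb{P}^1$ pulls back to a nonzero section on $C$ because its zero locus is a proper closed subset whose preimage is again proper. As $h^0(\mathbb{P}^1,\, \mathcal{O}_{\mathbb{P}^1}(1))\,=\,2$, this gives $\dim H^0(C,\, \mathcal{O}_C(f^{-1}(b)))\,\geq\,2$ for every $b\in\mathbb{P}^1$.

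Finally I would apply Lemma \ref{lem1} to the point $z\,=\,f^{-1}(b)$, concluding that $\widehat{f}(b)$ lies in the base locus of $|K_{C_d}|$; since $b$ was arbitrary, the whole image of $\widehat{f}$ is contained in the base locus. I do not expect a genuine obstacle here, as each step is standard; the only point requiring minor care is the identification $f^{-1}(b)\,=\,f^*[b]$ at the scheme-theoretic level and the injectivity of $f^*$ on sections, both of which follow from $f$ being a finite surjective morphism.
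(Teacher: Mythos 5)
Your proof is correct and follows essentially the same route as the paper: both deduce $\dim H^0(C,\,\mathcal{O}_C(f^{-1}(b)))\,\geq\, h^0(\mathbb{P}^1,\,\mathcal{O}_{\mathbb{P}^1}(1))\,=\,2$ by pulling back sections along $f$, and then invoke Lemma \ref{lem1}. You merely spell out the scheme-theoretic identification $f^{-1}(b)=f^*[b]$ and the injectivity of $f^*$ on sections, which the paper leaves implicit.
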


\begin{proof}
For any $b\, \in\, {\mathbb P}^1$, we have
$$
\dim H^0(C,\, {\mathcal O}_C(f^{-1}(b)))\, \geq\, \dim H^0({\mathbb P}^1,\,
{\mathcal O}_{{\mathbb P}^1}(b))\,=\, 2\, .
$$
So Lemma \ref{lem1} completes the proof.
\end{proof}

\begin{proposition}\label{baselocus}
Let $1\leq d\leq g-1$. The base locus of the canonical divisor $K_{C_d}$
is the set of points $(x_1,\cdots,x_d)$ in $C_d$ such that $h^0(\SO_C(x_1+\cdots+x_d))>1$.

Equivalently, the base locus is the set of points in $C_d$ where the Albanese map is not injective. 
\end{proposition}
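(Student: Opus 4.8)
The plan is to establish the two assertions in turn: first the set-theoretic identification of the base locus of $|K_{C_d}|$ with $\{z\in C_d : h^0(\SO_C(z))>1\}$, and then the translation of this locus into the non-injectivity locus of the Albanese map $u$.

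For the first assertion the fastest argument is simply to combine the two preceding results: Proposition \ref{prop1} shows that any base point $z$ satisfies $h^0(\SO_C(z))\geq 2$, and Lemma \ref{lem1} gives the reverse inclusion, so the two sets coincide. I would also record a second, self-contained proof resting on the description of $|K_{C_d}|$ from the proof of Proposition \ref{prop1}. Under the Macdonald identification \eqref{eq:kcd} the divisors $D_S$ with $S\in \Gr(d,H^0(C,K_C))$ span $H^0(C_d,K_{C_d})$, and one checks directly that $z\in D_S$ precisely when $S$ meets the subspace $W_z:=H^0(C,K_C\otimes\SO_C(-z))$ of sections vanishing on the length-$d$ subscheme $z$. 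Consequently $z$ lies in the base locus if and only if every $d$-dimensional $S\subset H^0(C,K_C)$ meets $W_z$ nontrivially. In the $g$-dimensional space $H^0(C,K_C)$ this occurs for all such $S$ exactly when $\dim W_z\geq g-d+1$, and Riemann--Roch together with Serre duality gives $\dim W_z=g-1-d+h^0(\SO_C(z))$; the condition is therefore equivalent to $h^0(\SO_C(z))\geq 2$.

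For the second assertion I would invoke the fiber formula \eqref{fiberalbanese}, namely $u^{-1}(u(z))=\PP H^0(\SO_C(z))$. Since $z$ is effective we always have $h^0(\SO_C(z))\geq 1$, so this projective space is nonempty and reduces to the single point $z$ exactly when $h^0(\SO_C(z))=1$. Hence $u$ fails to be injective at $z$ if and only if $h^0(\SO_C(z))\geq 2$, which is precisely the description of the base locus obtained above; this yields the stated equivalence.

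The dimension count and the Riemann--Roch bookkeeping are routine, so the only real content lies in the spanning statement for the family $\{D_S\}$, i.e. that the decomposable wedges $\bigwedge^d S$ span $\bigwedge^d H^0(C,K_C)$ under \eqref{eq:kcd}. This is what converts the geometric base-locus condition into the purely linear-algebraic condition ``$S\cap W_z\neq 0$ for all $S$'', after which the remaining steps are elementary. Since this spanning is already supplied in the proof of Proposition \ref{prop1}, I expect no further obstacle.
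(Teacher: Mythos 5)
Your proposal is correct and its main line of argument is exactly the paper's: the first assertion is obtained by combining Proposition \ref{prop1} with Lemma \ref{lem1}, and the second by the fiber description $u^{-1}(u(z))=\PP H^0(\SO_C(z))$ from \eqref{fiberalbanese}. Your supplementary self-contained argument (reducing to ``every $d$-plane $S$ meets $W_z=H^0(K_C(-z))$'' and computing $\dim W_z=g-1-d+h^0(\SO_C(z))$ by Riemann--Roch and Serre duality) is also sound and is a clean condensation of the same ideas; it differs only cosmetically from the paper's second proof, which instead identifies the evaluation map of $K_{C_d}$ at $z$ with $\bigwedge^d$ of the codifferential of the Albanese map.
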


\begin{proof}
The first part follows from the combination of Proposition \ref{prop1} and Lemma \ref{lem1}. The
second part follows from the observation that the fiber of the Albanese
map $u:C_d\too J(C)$ is $u^{-1}(u(z))=\PP(H^0(\SO_C(z)))$.

\end{proof}

We shall now give the second proof of Proposition \ref{baselocus}. Consider the Albanese map
$$
u:C_d\too J(C)
$$
Let $z\,=\,(z_1,\cdots,z_d)\,\in\, C_d$ be a point, which can also be
thought as a subscheme in $C$. The tangent space $T_z C_d$ of $C_d$ at $z$ is
\begin{equation}\label{tangent}
T_zC_d\,=\,\Hom(\SO_C(-z),\,\SO_z)\,=\,H^0(C,\,\SO_C(z)|_z)\, .
\end{equation}
Therefore, the differential of the Albanese map gives a linear map
$$
\phi\,:\,H^0(C,\SO_C(z)|_z)\,=\, T_zC_d \,\stackrel{du_z}\too\, T_{u(z)}J(C)=H^1(C,\,\SO_C)\, .
$$
This map $\phi$ is the connecting homomorphism
in the
long exact sequence of cohomologies associated to the short exact sequence of sheaves
\begin{equation}
\label{seq1}
0 \too \SO_C \too \SO_C(z) \too \SO_C(z)|_z \too 0
\end{equation}
on $C$.
The dual to the map $\phi$ is the homomorphism
$$
\gamma\,:\,H^0(C,K_C) \,\too\, H^0(C,K_C|_z)
$$
in the long exact sequence of
cohomologies associated to the short exact sequence
$$
0 \too K_C(-z) \too K_C \too K_C|_z \too 0
$$
obtained by applying the functor $Hom(\cdot,K_C)$
to the short exact sequence \eqref{seq1}.

Using \eqref{tangent} and Serre duality,
we obtain the following canonical isomorphism for the fiber
of $K_{C_d}$ over a point $z=(z_1,\cdots,z_d)\in C_d$
\begin{equation}\label{eq:fiberkcd}
K_{C_d}|_z\, = \, \bigwedge\nolimits^d \Hom(\SO_C(-z),\SO_z)^\vee
 \, = \, \bigwedge\nolimits^d \Ext^1(\SO_z,K_C(-z))
 \, = \, \bigwedge\nolimits^d H^0(K_C|_z)
\end{equation}

Using the identifications \eqref{eq:kcd}, and \eqref{eq:fiberkcd}
and taking the $d$-fold exterior product, we get that
$$
e_z:H^0(C_d,\, K_{C_d})\, =\, \bigwedge\nolimits^d H^0(C,\, K_C)
\stackrel{\wedge^d \gamma}{\too} \bigwedge\nolimits^d H^0(K_C|_z)=K_{C_d}|_z\, .
$$
The above map $e_z$ is the evaluation map at
$z$. We note that a point $z\in C_d$ is in the base locus of $K_{C_d}$ if and only
if $e_z$ is zero. This is equivalent to the map $\gamma$ being non-surjective,
which in turn is equivalent to the assertion that the map $\phi$ is not injective.
We have identified the map $\phi$ with the differential $du_z$ of the
Albanese map at $z$. Therefore, a point $z\in C_d$ is in the base locus of
$K_{C_d}$ if and only the Albanese map is not injective at $z$.

\section{Semi-orthogonal decompositions of 
$D(C_d)$}

\begin{definition}
\label{defsod}
A triangulated category $\mathcal{T}$ admits a nontrivial semi-orthogonal decomposition if 
there are two full non-trivial triangulated subcategories $\mathcal{A}, \mathcal{B}$ of $\mathcal{T}$ such that 
\begin{enumerate} 
\item $\mathrm{Hom}_{\mathcal{T}}(b,a)=0$ for every $b \in \mathcal{B}$, $a \in \mathcal{A}$ and 
\item $\mathcal{A}, \mathcal{B}$ generate $\mathcal{T}$. 
\end{enumerate} 
\end{definition}

In this section, using our results on the base locus of the canonical
bundle, and applying the work of Kawatani and Okawa \cite{KO}, we will obtain
restrictions to the existence of semi-orthogonal decompositions of the
triangulated category $D(C_d)$.

\begin{theorem}[{\cite[Corollary 1.3]{KO}}]
\label{KO-finite}
Let $X$ be a smooth projective variety such that 
the base locus of the canonical divisor is a finite set. Then 
there is no non-trivial semi-orthogonal decomposition of $D(X)$.
\end{theorem}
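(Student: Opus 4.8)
The statement is a general one, and the plan is to argue by contradiction following the point-object philosophy: I would show that away from the base locus of $|K_X|$ a skyscraper sheaf must lie in a single component, and then propagate this over the irreducible variety $X$. So suppose $D(X)=\langle\SA,\SB\rangle$ is a non-trivial semi-orthogonal decomposition, so that $\Hom(b,a)=0$ for all $a\in\SA$, $b\in\SB$, and every $E$ fits into a functorial triangle $a_E\to E\to b_E$ with $a_E\in\SA$, $b_E\in\SB$; write $\Phi_\SA,\Phi_\SB$ for the two projection functors. Both components being admissible, these are Fourier--Mukai functors with kernels in $D(X\times X)$. For a closed point $x$ set $a_x=\Phi_\SA(\SO_x)$, $b_x=\Phi_\SB(\SO_x)$, and define
$$\Sigma_\SA=\{x\in X:\ a_x\neq0\},\qquad \Sigma_\SB=\{x\in X:\ b_x\neq0\}.$$
By semicontinuity for the kernel functors these are closed, and since $\SO_x\neq0$ forces $a_x\neq0$ or $b_x\neq0$ we have $\Sigma_\SA\cup\Sigma_\SB=X$.

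The heart of the matter, and the step I expect to be the main obstacle, is the inclusion
$$\Sigma_\SA\cap\Sigma_\SB\ \subseteq\ \operatorname{Bs}|K_X|.$$
Equivalently: if $x$ is not a base point of $|K_X|$, then $\SO_x$ lies entirely in $\SA$ or entirely in $\SB$. Here the tool is Serre duality together with a section $s\in H^0(X,K_X)$ with $s(x)\neq0$, which exists precisely because $x\notin\operatorname{Bs}|K_X|$. Feeding the triangle $a_x\to\SO_x\to b_x$ through the long exact sequences obtained by applying $\Hom(\SO_x,-)$ and $\Hom(-,\SO_x)$, and combining the semi-orthogonality $\Hom^\bullet(b_x,a_x)=0$ with its Serre dual $\Hom^\bullet(a_x,b_x\otimes K_X[n])=0$ and the isomorphism $\SO_x\otimes K_X\cong\SO_x$ induced by $s$, one finds that the identity of the simple object $\SO_x$ cannot be distributed nontrivially between the two semi-orthogonal pieces; hence $a_x=0$ or $b_x=0$. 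This is exactly the Kawatani--Okawa comparison between the canonical base locus and the overlap of a decomposition, and it is the only genuinely delicate input. (That the section is indispensable is visible on $\PP^1$, where every $\SO_x$ splits across the Beilinson decomposition $\langle\SO,\SO(1)\rangle$ while $|K_{\PP^1}|$ is empty.)

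Granting the inclusion, the hypothesis that $\operatorname{Bs}|K_X|$ is finite makes $\Sigma_\SA\cap\Sigma_\SB$ finite. Since $X$ is irreducible and $\Sigma_\SA\cup\Sigma_\SB=X$ with both sets closed, one of them, say $\Sigma_\SA$, equals $X$; then $\Sigma_\SB=\Sigma_\SA\cap\Sigma_\SB$ is a finite set $F$, so $\SO_x\in\SA$ for every $x\notin F$. Consequently every $b\in\SB$ satisfies $\Hom^\bullet(b,\SO_x)=0$ for all $x\notin F$, which forces $\operatorname{Supp}(b)\subseteq F$: the whole of $\SB$ is supported on the finite set $F$. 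For a closed point $p$ the category $D_{\{p\}}(X)$ of objects supported at $p$ is generated by $\SO_p$, whose derived endomorphism algebra $\Ext^\bullet(\SO_p,\SO_p)=\bigwedge^\bullet T_pX$ is local, so $D_{\{p\}}(X)$ is indecomposable. Splitting each object of $\SB$ --- and of its $\SA$-complement, which shares the finite support --- according to the points of $F$ refines the decomposition into one of each block $D_{\{p\}}(X)$; indecomposability then gives $\SB=\bigoplus_{p\in F_0}D_{\{p\}}(X)$ for some nonempty $F_0\subseteq F$.

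Finally I would use the Serre functor to reach a contradiction. Because $K_X$ is trivial near each point, $S_X=(-)\otimes K_X[n]$ preserves each $D_{\{p\}}(X)$ (acting there as the shift $[n]$), and hence preserves $\SB$. For $a\in\SA$ and $b\in\SB$ this gives
$$\Hom^\bullet(a,b)\ =\ \Hom^\bullet(S_X^{-1}b,\,a)^{\vee}\ =\ 0,$$
since $S_X^{-1}b\in\SB$. Thus $\SA$ and $\SB$ are orthogonal in both directions, so $D(X)=\SA\oplus\SB$ is an orthogonal decomposition; such a decomposition corresponds to writing $X$ as a disjoint union, contradicting the connectedness of the irreducible variety $X$. (The same observation applied inside $D_{\{p\}}(X)$, where $S_X$ is a shift, re-proves the indecomposability used above.) Therefore $\SB=0$, contradicting non-triviality, and $D(X)$ admits no non-trivial semi-orthogonal decomposition.
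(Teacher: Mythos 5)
A preliminary remark on the comparison itself: the paper does not prove this statement at all --- it is imported verbatim as \cite[Corollary 1.3]{KO} --- so your attempt has to be measured against Kawatani--Okawa's argument rather than against anything in this article. Your proposal reproduces the architecture of that argument faithfully: the loci $\Sigma_\SA,\Sigma_\SB$, their closedness via the fact that the projection functors of an admissible decomposition are Fourier--Mukai, the covering $\Sigma_\SA\cup\Sigma_\SB=X$, the key inclusion $\Sigma_\SA\cap\Sigma_\SB\subseteq\op{Bs}|K_X|$, the reduction to $\SB$ supported on a finite set, and the Serre-functor/connectedness endgame. The endgame is correct as written: tensoring by $K_X$ is isomorphic to the identity on objects supported on a finite set, so $S_X$ preserves $\SB$, Serre duality then kills $\Hom(\SA,\SB)$, and a completely orthogonal decomposition is incompatible with $\End(\SO_X)=\CC$ together with the fact that a nonzero object supported on points has nonzero hypercohomology.

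The genuine gap sits exactly where you predicted it would: the inclusion $\Sigma_\SA\cap\Sigma_\SB\subseteq\op{Bs}|K_X|$ is asserted, not proved. The sentence ``one finds that the identity of the simple object $\SO_x$ cannot be distributed nontrivially between the two semi-orthogonal pieces'' is a restatement of the desired conclusion, not an argument. Moreover the ingredients you list do not visibly combine to give it: the vanishing $\Hom^\bullet(b_x,a_x)=0$ and its Serre dual $\Hom^\bullet(a_x,b_x\otimes K_X[n])=0$, applied to the obvious compositions one can form from the triangle $a_x\to\SO_x\to b_x$, yield nothing, because those compositions (such as $a_x\to\SO_x\to b_x$, before or after twisting by $s$) already vanish for the trivial reason that consecutive maps in a distinguished triangle compose to zero. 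Extracting a contradiction requires the substantive analysis carried out in the proof of \cite[Theorem 1.1]{KO}, which plays the connecting morphism $b_x\to a_x[1]$ and the two long exact sequences against the Serre-duality pairing on $\Hom^\bullet(\SO_x,\SO_x)$, using the section $s$ with $s(x)\neq 0$ to identify the top-degree part of that pairing. Since everything after this inclusion is routine once $\op{Bs}|K_X|$ is assumed finite, what you have written is a correct and well-organized reduction of the theorem to its hardest lemma, but not a proof of the theorem.
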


\begin{corollary}\label{cor:cd-sod}
Let $C$ be a smooth complex projective curve of genus $g\geq 3$ and
let $d$ be a integer with $d<\gon(C)$. Then there is no non-trivial
semi-orthogonal decomposition of $D(C_d)$.
\end{corollary}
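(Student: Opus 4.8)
The plan is to obtain this statement as an immediate synthesis of the two ingredients already in hand: the description of the base locus of $K_{C_d}$ provided by Proposition \ref{baselocus}, and the finiteness criterion of Kawatani and Okawa recorded as Theorem \ref{KO-finite}. The strategy is to show that, under the hypothesis $d < \gon(C)$, the base locus of the canonical system $|K_{C_d}|$ is in fact empty, and then to invoke Theorem \ref{KO-finite}, since the empty set is certainly finite. So the task reduces to a base-locus computation.

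Before applying Proposition \ref{baselocus} I would first verify that its hypothesis $1 \leq d \leq g-1$ is met, and this is exactly where the genus assumption $g \geq 3$ is used. By the Brill--Noether theorem every curve of genus $g$ carries a pencil of degree $\lfloor (g+3)/2\rfloor$, so that $\gon(C) \leq \lfloor (g+3)/2\rfloor$; combined with the hypothesis $d < \gon(C)$ this yields $d \leq \lfloor (g+1)/2\rfloor$. For $g \geq 3$ one has $\lfloor (g+1)/2\rfloor \leq g-1$ (equivalently $g+1 \leq 2g-2$), so indeed $1 \leq d \leq g-1$ and Proposition \ref{baselocus} is applicable.

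Next I would identify the base locus itself. According to Proposition \ref{baselocus}, a point $z = (x_1,\ldots,x_d) \in C_d$ lies in the base locus of $K_{C_d}$ precisely when $h^0(\SO_C(x_1+\cdots+x_d)) > 1$. But $\SO_C(x_1+\cdots+x_d)$ is an effective line bundle of degree $d$, and by the definition of gonality recalled in the introduction, no line bundle of degree strictly less than $\gon(C)$ can satisfy $h^0 \geq 2$. Since $d < \gon(C)$ by hypothesis, it follows that $h^0(\SO_C(z)) = 1$ for every $z \in C_d$, so the base locus is empty. Theorem \ref{KO-finite} then applies and gives the desired conclusion that $D(C_d)$ admits no non-trivial semi-orthogonal decomposition.

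Since the argument only assembles results already established, there is no genuinely hard step; the only points requiring attention are the elementary bookkeeping with the gonality bound needed to guarantee $d \leq g-1$ (so that Proposition \ref{baselocus} may be used), and the observation that an empty base locus is a fortiori finite, which is what licenses the appeal to Theorem \ref{KO-finite}. It is worth noting that the conclusion we actually establish is stronger than required: the base locus is not merely finite but empty.
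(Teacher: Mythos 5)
Your proposal is correct and follows essentially the same route as the paper: use the gonality hypothesis to show $h^0(\SO_C(z))=1$ for every $z\in C_d$, conclude via Proposition \ref{baselocus} (the paper cites Proposition \ref{prop1}, which is the relevant half of it) that $|K_{C_d}|$ is base-point free, and then apply Theorem \ref{KO-finite}. Your extra check that $d<\gon(C)$ together with $g\geq 3$ forces $d\leq g-1$, so that the base-locus result is applicable, is a detail the paper leaves implicit, but it does not change the argument.
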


\begin{proof}
Note that $h^0(\SO_C(\sum z_i))=1$ for any $z=(z_1,\cdots,z_d)$,
because $d<\gon(C)$. So Proposition \ref{prop1} implies that $z$ is not a base
point of $K_{C_d}$, and hence the canonical divisor $K_{C_d}$ is base-point
free. Now from Theorem \ref{KO-finite} it follows that $D(C_d)$ has no
non-trivial semi-orthogonal decomposition. 
\end{proof}

When $d\,=\,2$ we are able to prove a stronger result which disposes
of condition on the gonality of $C$. We will use the following result:

\begin{theorem}[{\cite[Theorem 1.8]{KO}}]
\label{KO-surface}
Let $S$ be a minimal smooth projective surface of general type 
with $h^0(K_S)>1$ and satisfying the condition that for any one-dimensional connected
component $Z\subset \op{Bs}|K_S|$, its intersection matrix is negative
definite. 
Then 
there is no non-trivial semi-orthogonal decomposition of $D(S)$.
\end{theorem}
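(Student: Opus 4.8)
The plan is to suppose that a nontrivial semi-orthogonal decomposition $D(S)=\langle \mathcal{A},\mathcal{B}\rangle$ (with $\Hom^\bullet(\mathcal{B},\mathcal{A})=0$) exists and to derive a contradiction, by combining the support-and-canonical-section machinery underlying Theorem \ref{KO-finite} with a contraction of the one-dimensional part of $\op{Bs}|K_S|$ supplied by the negative-definiteness hypothesis. First I would set up the support decomposition. Since $D(S)$ is saturated, each admissible factor has a classical generator, and I let $Y_{\mathcal{A}}=\op{Supp}(G_{\mathcal{A}})$ and $Y_{\mathcal{B}}=\op{Supp}(G_{\mathcal{B}})$ be the (closed) supports of such generators $G_{\mathcal{A}}\in\mathcal{A}$, $G_{\mathcal{B}}\in\mathcal{B}$. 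A skyscraper $k(x)$ cannot be right-orthogonal to both generators, for that would place $0\neq k(x)$ in $\langle\mathcal{A},\mathcal{B}\rangle^\perp=0$; hence $Y_{\mathcal{A}}\cup Y_{\mathcal{B}}=S$, and nontriviality forces both supports to be nonempty. The key input, which is the same mechanism that yields Theorem \ref{KO-finite}, is the inclusion $Y_{\mathcal{A}}\cap Y_{\mathcal{B}}\subseteq\op{Bs}|K_S|$: using a global canonical section $s$ with $s(x)\neq 0$ for $x\notin\op{Bs}|K_S|$ (such sections exist because $h^0(K_S)>1$), together with Serre duality on $S$ and the semiorthogonality $\Hom^\bullet(\mathcal{B},\mathcal{A})=0$, one shows that $x$ cannot lie in both supports. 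I would either cite this or reprove it along these lines.

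With this engine in hand the deduction becomes topological. The base locus $\op{Bs}|K_S|$ is a proper closed subset of $S$ (it cannot equal $S$, since the members of $|K_S|$ are effective divisors), so its complement $U:=S\setminus\op{Bs}|K_S|$ is a nonempty Zariski-open subset of the irreducible surface $S$, hence irreducible. On $U$ the two relatively closed sets $Y_{\mathcal{A}}\cap U$ and $Y_{\mathcal{B}}\cap U$ are disjoint by the engine and cover $U$; were both nonempty, $U$ would be a union of two proper closed subsets, contradicting irreducibility. Thus, after relabeling, $Y_{\mathcal{A}}\subseteq\op{Bs}|K_S|$, i.e. the nonzero admissible subcategory $\mathcal{A}$ is entirely supported on the canonical base locus.

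It then remains to contradict the existence of a nonzero admissible $\mathcal{A}$ supported on $Z:=\op{Bs}|K_S|$. If $Z$ is finite this is precisely the situation excluded by Theorem \ref{KO-finite}, so I may assume the one-dimensional part $F=\bigsqcup_j Z_j\subseteq Z$ is nonempty, each connected component $Z_j$ having negative-definite intersection matrix. By Grauert's contractibility criterion the negative-definiteness allows me to contract $F$ by a birational morphism $\pi\colon S\to\overline{S}$ onto a normal projective surface whose canonical base locus is finite, and I would transfer the problem across $\pi$: the component $\mathcal{A}$, being supported on $F$, becomes concentrated over the finitely many points $\pi(F)$, a configuration that the finite-base-locus criterion (in a form adapted to the mildly singular $\overline{S}$) forbids. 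This forces $\mathcal{A}=0$, contradicting nontriviality, and completes the proof. Here minimality and general type enter as the structural context ensuring that the fixed one-dimensional part of $|K_S|$ consists of contractible configurations.

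The main obstacle is precisely this final step: ruling out a nonzero admissible subcategory supported on the negative-definite curve configuration $F$. Concretely, one must either establish a singular analogue of Theorem \ref{KO-finite} on the contracted surface $\overline{S}$ and compare $D(S)$ with $D(\overline{S})$ across the (expected crepant) contraction of the configurations $Z_j$, or else argue intrinsically on $S$ — for instance through a Hochschild-homology support computation showing that a nonzero admissible summand cannot have its support confined to the one-dimensional $F$. This is exactly where the intersection-theoretic hypothesis on the one-dimensional components of $\op{Bs}|K_S|$ is consumed, and it is the technical crux that separates Theorem \ref{KO-surface} from the already-settled finite case.
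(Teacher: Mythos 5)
First, a point of comparison: the paper itself does not prove Theorem \ref{KO-surface} at all --- it is imported verbatim from \cite[Theorem 1.8]{KO} and used as a black box --- so your attempt can only be measured against Kawatani--Okawa's argument. The first half of your sketch is a sound reconstruction of their machinery: the support dichotomy $Y_{\mathcal{A}}\cup Y_{\mathcal{B}}=S$, the canonical-section/Serre-duality engine giving $Y_{\mathcal{A}}\cap Y_{\mathcal{B}}\subseteq \op{Bs}|K_S|$, and the connectedness argument on $S\setminus \op{Bs}|K_S|$ forcing one factor to be supported on the base locus are exactly the mechanism behind Theorem \ref{KO-finite}. (One small slip: a canonical section with $s(x)\neq 0$ exists simply because $x\notin\op{Bs}|K_S|$; the hypothesis $h^0(K_S)>1$ is not what produces it.)

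The genuine gap is the one you yourself flag at the end: the entire content separating Theorem \ref{KO-surface} from Theorem \ref{KO-finite} --- ruling out a nonzero admissible subcategory supported on the negative-definite configuration $F$ --- is not proved but deferred, and the contraction route you propose would not close it as stated. Concretely: (i) Grauert/Artin contraction of a negative-definite configuration yields in general only a normal analytic surface or algebraic space; projectivity of $\overline{S}$ is not automatic and is not addressed. (ii) The contraction is crepant only when each $Z_j$ is an ADE configuration of $(-2)$-curves with $K_S\cdot Z_j=0$; the hypotheses allow, say, a smooth rational curve with $C^2=-3$ and hence $K_S\cdot C=1$ in the fixed part of $|K_S|$, whose contraction is non-crepant with non-Gorenstein image, so $\op{Bs}|K_{\overline{S}}|$ is not related to $\op{Bs}|K_S|$ in the way your transfer requires --- ``expected crepant'' is simply false here. (iii) Most seriously, $D(S)$ and $D(\overline{S})$ are not equivalent, and you give no mechanism by which an admissible $\mathcal{A}\subset D(S)$ supported on the exceptional locus induces an admissible subcategory of $D(\overline{S})$; moreover the finite-base-locus criterion is proved using smoothness (Serre duality with the dualizing line bundle), so the ``singular analogue'' you invoke is itself an unproven step of comparable difficulty. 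Kawatani--Okawa dispose of this step by an argument intrinsic to the smooth surface $S$, with no contraction, and that is precisely where the negative-definiteness hypothesis is consumed; since your sketch stops where that work begins, it does not amount to a proof of the statement.
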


We start with some results about the geometry of $C_2$.

\begin{lemma}
\label{lemma1}
Let $g\geq 3$. The surface $C_2$ is minimal. It has an embedded rational curve if and only
if $C$ is hyperelliptic, and in this case
\begin{itemize}
\item the rational curve is
$\Gamma=\{x+\sigma(x)\}$, where $\sigma$ is the hyperelliptic involution, and 

\item $\Gamma^2=1-g$, i.e., $\Gamma$ is a $(1-g)$-curve.
\end{itemize}
\end{lemma}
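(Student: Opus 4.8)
The plan is to establish the three assertions---minimality, the hyperelliptic dichotomy, and the value of $\Gamma^2$---in that order. For minimality I would argue straight from the nefness of the canonical class. By \eqref{eq:canonicald} we have $K_{C_2} = (g-3)x + \theta$; as noted in the proof of Lemma \ref{usingconj} the class $x$ is ample and $\theta = u^*\Theta$ is nef, so for $g \geq 3$ the combination $K_{C_2}$ is nef. If $E$ were a $(-1)$-curve, then $E^2 = -1$ and adjunction would give $K_{C_2}\cdot E = -1 < 0$, contradicting nefness. Hence $C_2$ has no $(-1)$-curve and is minimal.

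For the dichotomy the crucial fact is that an abelian variety contains no rational curves, so the Albanese map $u : C_2 \too J(C)$ is constant on every smooth rational curve $R \subset C_2$. Thus $R$ lies in a single fibre $u^{-1}(u(D)) = \PP(H^0(\SO_C(D)))$, see \eqref{fiberalbanese}, for $D \in R$, and since $R$ is a curve this forces $h^0(\SO_C(D)) \geq 2$; as $\deg D = 2$, the divisor $D$ moves in a pencil, i.e. $C$ is hyperelliptic. Clifford's theorem gives $h^0(\SO_C(D)) = 2$, so the fibre is the $\PP^1$ traced out by the unique $g^1_2$, namely $\Gamma = \{x + \sigma(x)\}$, and the irreducible curve $R$ coincides with it; uniqueness of the hyperelliptic pencil then shows $\Gamma$ is the only embedded rational curve. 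Conversely, for $C$ hyperelliptic the map $\widehat f$ of Corollary \ref{cor1} realises $\Gamma = \{x + \sigma(x)\}$ as the image of $\PP^1$, and $\Gamma \cong C/\sigma \cong \PP^1$. The delicate point is to verify that $\Gamma$ is genuinely smooth at the branch points, where the fibre is the non-reduced divisor $2w$ supported at a Weierstrass point $w$; here I would run a local computation, writing $\sigma$ as $t \mapsto -t$ in a local coordinate $t$ at $w$ and checking that in the elementary symmetric coordinates $(t_1+t_2,\, t_1 t_2)$ on $C_2$ the curve $\Gamma$ is the smooth coordinate line $\{t_1 + t_2 = 0\}$.

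To compute $\Gamma^2$, the plan is to pull back along the degree-two quotient $\pi : C \times C \too C_2$, so that $(\pi^*\Gamma)^2 = 2\,\Gamma^2$. The preimage $\pi^{-1}(\Gamma)$ is the graph $\Gamma_\sigma = \{(x,\sigma(x))\}$ of the involution, and because $\pi$ is étale away from the diagonal the multiplicity is one, giving $\pi^*\Gamma = \Gamma_\sigma$. Since $\Gamma_\sigma$ is the image of the diagonal $\Delta$ under the automorphism $(x,y)\mapsto(x,\sigma(y))$ of $C\times C$, we get $\Gamma_\sigma^2 = \Delta^2 = \deg T_C = 2-2g$, and dividing by the degree yields $\Gamma^2 = 1-g$.

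I expect the smoothness of $\Gamma$ at the Weierstrass points to be the only real obstacle: away from the branch locus $\widehat f$ is visibly an immersion, and the remaining ingredients are just the nefness of $K_{C_2}$ and routine intersection theory on $C\times C$ transported through $\pi$.
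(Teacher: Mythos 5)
Your proof is correct, and for the two substantive claims it follows essentially the same route as the paper: rational curves must lie in fibres of the Albanese map because $J(C)$ contains none, the only positive-dimensional fibre is $\PP H^0(\SO_C(D))$ over the hyperelliptic class (with $h^0=2$ ruled up by Clifford in your version, by the ``no degree-two $g^2_2$'' argument in the paper's), and $\Gamma^2$ is computed by pulling back to $C\times C$, identifying $\pi^*\Gamma$ with the graph $\{(x,\sigma(x))\}$, which is carried to the diagonal by $(\id,\sigma)$ so that $2\Gamma^2=\Delta^2=2-2g$. Where you genuinely diverge is on minimality: the paper leaves it implicit (it follows from the classification of rational curves, since the only candidate has $\Gamma^2=1-g\leq -2$ and so is not a $(-1)$-curve), whereas you derive it directly from the nefness of $K_{C_2}=(g-3)x+\theta$ together with adjunction. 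Your route has the advantage of being self-contained and of not depending on first completing the rational-curve classification; it also makes explicit a fact the paper only records later, in the referee's remark after Theorem \ref{c2-sod}. Your additional care about smoothness of $\Gamma$ at the Weierstrass points, via the local model $\{t_1+t_2=0\}$ in symmetric coordinates, addresses a point the paper glosses over by simply identifying $\Gamma$ with the scheme-theoretic fibre $\PP H^0(\SO_C(D))\cong\PP^1$; both are fine, and your check is a harmless extra.
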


\begin{proof}
For all points in the image of the Albanese map $u:C_2 \too J(C)$, the fiber is a projective space \eqref{fiberalbanese}. Since $C_2$ is a surface, the fiber of the Albanese has at most dimension 2. But the fiber cannot be $\PP^2$, because this would mean that there is a degree 2 line bundle $A$ on $C$ with $h^0(A)=3$, and this would imply that the genus of $C$ is $\PP^1$, contradicting the hypothesis $g\,\geq\,3$. In particular, the Albanese map is not constant.

If $C$ is hyperelliptic then $C_2$ has no rational curve. Indeed, a rational curve has to map to a point, so the fiber over this point would be exactly $\PP^1$ (by the previous argument), and we would have a line bundle $A$ on $C$ with $h^0(A)=2$, so $A$ would be the hyperelliptic divisor.

Let us now suppose that $C$ is hyperelliptic. A rational curve in
$C_2$ has to be in a fiber of the Albanese map,
but the only positive dimensional fiber of this map is one dimensional
(by the argument in the first paragraph of this proof), 
and it is the fiber over
the hyperelliptic line bundle.

We denote the above mentioned fiber of $u$ by $\Gamma$, so $\Gamma$
is isomorphic to $\PP^1$. We now calculate its self-intersection.
The self-intersection of the diagonal $\Delta\subset
C\times C$ is
$$\Delta^2=2-2g$$
by Poincar\'e--Hopf theorem. 
The automorphism of $C \times C$, which is identity on the first factor
and the hyperelliptic involution on the second, sends the diagonal 
$\Delta$ to the graph $\wt\Gamma=\{x,\sigma(x)\}$ of the hyperelliptic
involution, hence also $\wt\Gamma^2=2-2g$. Consider the diagram
$$
\xymatrix{
{\wt\Gamma} \ar[d]_{f} \ar@{^(->}[r] & {C\times C} \ar[d]^{\pi} \\ 
{\Gamma} \ar[r] & {C_2} \\ 
}
$$
Observe that $\wt\Gamma\cong C$ and the morphism $f$ is just the
quotient by the hyperelliptic involution.
Therefore we have $\pi_*\wt\Gamma = 2\Gamma$ and
$\pi^*\Gamma = \wt\Gamma$ as cycles, and the projection formula for
intersection gives
$$
\pi_*(\wt\Gamma \cdot \wt\Gamma)=\pi_*(\wt\Gamma \cdot \pi^*\Gamma)=
\pi_*(\wt\Gamma) \cdot \Gamma=2\Gamma\cdot \Gamma
$$
and hence $\Gamma^2=1-g$.
\end{proof}
 
Now we prove that $C_2$ is a surface of general type. 
 
\begin{lemma}
\label{lemma2}
If $g \geq 3$, then the symmetric product $C_2$ is of general type.
\end{lemma}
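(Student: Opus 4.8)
The plan is to prove that the canonical class $K_{C_2}$ is big and nef; since $C_2$ is a smooth projective surface, this forces $\kappa(C_2)=2$, i.e.\ $C_2$ is of general type. Nefness is already in hand: by Lemma \ref{lemma:canonicald} we have $K_{C_2}=(g-3)x+\theta$, where $x$ is ample and $\theta=u^*\Theta$ is nef, so for $g\geq 3$ the coefficient $g-3\geq 0$ makes $K_{C_2}$ nef, exactly as in the proof of Lemma \ref{usingconj}. For a nef divisor on a surface, bigness is equivalent to strict positivity of the top self-intersection, so it remains to verify that $K_{C_2}^2>0$.

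First I would compute the three intersection numbers $x^2$, $x\cdot\theta$ and $\theta^2$ on $C_2$. The self-intersection $x^2=1$ is obtained by intersecting two distinct representatives $\{p+q\,:\,q\in C\}$ and $\{p'+q\,:\,q\in C\}$ with $p\neq p'$: their unique common point is $p+p'$. For $x\cdot\theta$, the restriction of $u$ to the curve in class $x$ is the Abel--Jacobi embedding of $C$ into $J(C)$, whose image has class $\Theta^{g-1}/(g-1)!$; hence $x\cdot\theta=\bigl(\Theta^{g-1}/(g-1)!\bigr)\cdot\Theta=g$. For $\theta^2$, the Albanese map $u$ is birational onto its image $W_2\subset J(C)$ (it is generically injective, contracting at most the single hyperelliptic fiber), so $\theta^2=[W_2]\cdot\Theta^2$; by Poincar\'e's formula $[W_2]=\Theta^{g-2}/(g-2)!$, giving $\theta^2=\Theta^g/(g-2)!=g(g-1)$.

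Substituting these into $K_{C_2}=(g-3)x+\theta$ then yields
\[
K_{C_2}^2=(g-3)^2x^2+2(g-3)\,x\cdot\theta+\theta^2=(g-3)^2+2g(g-3)+g(g-1)=4g^2-13g+9,
\]
whose roots are $g=1$ and $g=9/4$; hence $K_{C_2}^2>0$ for every $g\geq 3$. Combined with nefness, this shows $K_{C_2}$ is big and nef, so $C_2$ is of general type. Equivalently, one may argue bigness through asymptotic Riemann--Roch: since $K_{C_2}$ is nef with $K_{C_2}^2>0$, one has $H^2(C_2,nK_{C_2})=H^0(C_2,(1-n)K_{C_2})=0$ for $n\geq 2$, so $\dim H^0(C_2,nK_{C_2})$ grows like $n^2K_{C_2}^2/2$.

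The step I expect to be most delicate is the evaluation of $\theta^2$, which relies on $u\colon C_2\to J(C)$ having degree one onto $W_2$; this must be checked even in the hyperelliptic case, where $u$ contracts the rational curve $\Gamma$ of Lemma \ref{lemma1} but is otherwise generically injective. Granting this, the projection formula and Poincar\'e's formula finish the computation. I note that for $g\geq 4$ one can bypass the self-intersection computation entirely: there $g-3\geq 1$, so $K_{C_2}=(g-3)x+\theta$ is a sum of an ample and a nef class, hence ample and a fortiori big. Only the boundary case $g=3$, where $K_{C_2}=\theta$, genuinely requires the numerical input $\theta^2=6>0$.
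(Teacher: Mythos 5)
Your proof is correct, and it reaches the conclusion by a genuinely different route than the paper. The paper invokes Beauville's criterion (Proposition X.1 of \cite{Be}): for a minimal surface, $K^2>0$ together with irrationality forces general type; it therefore needs minimality (Lemma \ref{lemma1}), irrationality (via non-constancy of the Albanese map), and the positivity $K_{C_2}^2>0$, which it computes by pulling back to $C\times C$, where $(K_C\boxtimes K_C(-\Delta))^2=(2g-2)(4g-9)$. You instead use nefness of $K_{C_2}=(g-3)x+\theta$ (already available from Lemma \ref{lemma:canonicald} and the argument of Lemma \ref{usingconj}) plus $K_{C_2}^2>0$ to conclude that $K_{C_2}$ is big, which gives general type directly without appealing to minimality or irrationality; and you compute $K_{C_2}^2$ intrinsically on $C_2$ from $x^2=1$, $x\cdot\theta=g$, $\theta^2=g(g-1)$ (the last via Poincar\'e's formula and birationality of $u$ onto $W_2$, which you correctly flag and justify). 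The two computations agree: your $4g^2-13g+9=(g-1)(4g-9)$ is exactly half of the paper's value on the double cover $C\times C$, as it must be. Your route is arguably more self-contained given the earlier lemmas of the paper, while the paper's route has the advantage of only requiring the classical surface classification input and no nefness discussion; your observation that for $g\geq 4$ the divisor $K_{C_2}$ is already ample (ample plus nef) is a nice shortcut that isolates $g=3$ as the only case needing the numerical computation.
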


\begin{proof}
In view of \cite[Proposition X.1]{Be} it suffices to show that
\begin{itemize}
\item the self-intersection of the canonical divisor of $C_2$ is positive, and
$C_2$ is irrational surface.
\end{itemize}

{}From \cite[Proposition I.8]{Be}, we can compute the self-intersection on $C^2.$ We know that
the pull back of the canonical divisor on $C_2$ to $C^2$ is $K_C \boxtimes K_C ( -\Delta)$. The
self-intersection of $K_C \boxtimes K_C ( -\Delta)$ is
$$
2(2g-2)^2-(2g-2)-4(2g-2)=(2g-2)(4g-9)
$$
and it is positive when $g \geq 3$. 

To prove that $C_2$ is not rational by contradiction, assume that $C_2$ is rational. Then $C_2$ can be covered by 
rational curves, which implies that the Albanese map $u:C_2\to J(C)$
is constant, but we know that the Albanese map is not constant
by the argument in the first paragraph of the proof of \ref{lemma1}.
Hence $C_2$ is not rational.

Therefore $C_2$ is of general type when $g \geq 3$.
\end{proof}

\begin{remark}
When $g\,=\,2$, we know that $C_2$ is the blow-up of the $J(C)$ at a point. This has two consequences: $C_2$ is not a surface of general type, and $D(C_2)$ admits a nontrivial semi-orthogonal decomposition (using the blow-up formula in \cite{Or}).
\end{remark}

Finally we check that $p_g > 1$ for $C_2$.

\begin{lemma}
\label{lemma3}
The canonical bundle $K_{C_2}$ has $h^0(K_{C_2})={g \choose 2}>1$
(recall that $g\geq 3$).
\end{lemma}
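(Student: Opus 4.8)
The plan is to compute $h^0(K_{C_2})$ directly from Macdonald's formula \eqref{eq:kcd}, which in the case $d=2$ reads
$$
H^0(C_2,\, K_{C_2})\, =\, \bigwedge\nolimits^2 H^0(C,\, K_C)\, .
$$
Since $C$ has genus $g$, we know $\dim H^0(C,\, K_C)\,=\, g$, and the dimension of the second exterior power of a $g$-dimensional vector space is $\binom{g}{2}$. Therefore $h^0(K_{C_2})\,=\, \binom{g}{2}$, and this is the entire content of the first assertion.

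For the inequality $\binom{g}{2}\,>\,1$, I would simply observe that under the standing hypothesis $g\,\geq\, 3$ we have $\binom{g}{2}\,\geq\, \binom{3}{2}\,=\, 3\,>\,1$, so the claim follows immediately. There is no real obstacle here: the statement is a one-line consequence of the already-quoted formula \eqref{eq:kcd} combined with the elementary fact about dimensions of exterior powers.

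If one wished to avoid invoking Macdonald's formula as a black box, an alternative would be to use the fiberwise description \eqref{eq:fiberkcd} established in the previous section, which exhibits $K_{C_2}|_z\,=\,\bigwedge^2 H^0(K_C|_z)$ pointwise and globalizes to the same identification of $H^0(C_2,\,K_{C_2})$ with $\bigwedge^2 H^0(C,\,K_C)$; but since \eqref{eq:kcd} is stated and available, the shortest route is to cite it. The only point requiring the slightest care is making sure one is counting the dimension of the exterior power correctly, rather than of the tensor or symmetric power, but this is routine.

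In summary, the proof is essentially a dimension count: $h^0(K_{C_2})\,=\,\dim\bigwedge^2 H^0(C,\,K_C)\,=\,\binom{g}{2}$ by \eqref{eq:kcd}, and $\binom{g}{2}\,>\,1$ whenever $g\,\geq\, 3$. This is exactly the last of the three ingredients (together with minimality, general type, and the negativity $\Gamma^2\,=\,1-g$ from Lemmas \ref{lemma1} and \ref{lemma2}) needed to apply Theorem \ref{KO-surface} to the surface $C_2$, so no deeper argument is warranted at this stage.
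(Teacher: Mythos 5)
Your proof is correct and matches the paper's, which likewise just invokes Macdonald's formula $H^0(C_d,\,K_{C_d})=\bigwedge^d H^0(C,\,K_C)$ and leaves the dimension count $\dim\bigwedge^2 H^0(C,\,K_C)=\binom{g}{2}>1$ for $g\geq 3$ implicit. No further comment is needed.
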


\begin{proof}
Macdonald, \cite{Ma}, proves that $H^0(C_d,\, K_{C_d})\,=\,\bigwedge^d H^0(C,\, K_C)$.
\end{proof}

\begin{theorem}\label{c2-sod}
Let $C$ be a smooth projective curve of genus $g \geq 3$.
Then there is no non-trivial semi-orthogonal decomposition on $D(C_2)$.
\end{theorem}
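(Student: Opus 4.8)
The plan is to apply the Kawatani--Okawa criterion, Theorem \ref{KO-surface}, to the surface $S\,=\,C_2$. The structural hypotheses are already in hand: Lemma \ref{lemma1} shows that $C_2$ is minimal, Lemma \ref{lemma2} shows that it is of general type, and Lemma \ref{lemma3} gives $h^0(K_{C_2})\,=\,{g \choose 2}\,>\,1$ for $g\,\geq\,3$. What remains is to verify the hypothesis on the base locus: that every one-dimensional connected component of $\op{Bs}|K_{C_2}|$ has negative definite intersection matrix.

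To control the base locus I would invoke Proposition \ref{baselocus}, which applies since $d\,=\,2\,\leq\,g-1$ (using $g\,\geq\,3$). It identifies $\op{Bs}|K_{C_2}|$ with the set of points $z\,=\,(z_1,\,z_2)$ satisfying $h^0(\SO_C(z_1+z_2))\,>\,1$, equivalently the locus where the Albanese map $u\,:\,C_2\,\too\,J(C)$ is not injective. Since an effective divisor of degree two moving in a pencil forces $C$ to be hyperelliptic with $z_1+z_2$ a fiber of the hyperelliptic pencil, the argument naturally splits according to whether $C$ is hyperelliptic.

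If $C$ is not hyperelliptic, then no effective degree-two divisor has $h^0\,>\,1$, so $\op{Bs}|K_{C_2}|\,=\,\emptyset$; it has no one-dimensional component and the hypothesis of Theorem \ref{KO-surface} holds vacuously. If $C$ is hyperelliptic, then by the above the base locus is precisely the fiber of the Albanese map over the hyperelliptic line bundle, which by Lemma \ref{lemma1} is the single rational curve $\Gamma\,=\,\{x+\sigma(x)\}\,\cong\,\PP^1$. This is the unique one-dimensional connected component, and because it is irreducible its intersection matrix is the $1\times 1$ matrix $[\Gamma^2]\,=\,[1-g]$, which is negative definite since $g\,\geq\,3$ gives $1-g\,<\,0$. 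In either case the hypotheses of Theorem \ref{KO-surface} are met, and the conclusion follows.

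I expect the only genuinely delicate point to be confirming that in the hyperelliptic case the base locus is exactly $\Gamma$ as a set, with no additional isolated base points or extra one-dimensional components. This is, however, precisely what the combination of Proposition \ref{baselocus} and Lemma \ref{lemma1} provides: the base locus equals the non-injectivity locus of the Albanese map, and the only positive-dimensional fiber of that map is the hyperelliptic one $\Gamma$.
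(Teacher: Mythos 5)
Your proposal is correct and follows essentially the same route as the paper: verify the hypotheses of Theorem \ref{KO-surface} via Lemmas \ref{lemma1}, \ref{lemma2}, \ref{lemma3}, and use Proposition \ref{baselocus} together with Lemma \ref{lemma1} to identify the base locus as empty (non-hyperelliptic case) or as the single $(1-g)$-curve $\Gamma$ (hyperelliptic case). The only cosmetic difference is that the paper dispatches the non-hyperelliptic case with Theorem \ref{KO-finite} rather than noting, as you do, that the hypothesis of Theorem \ref{KO-surface} holds vacuously; both are valid.
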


\begin{proof}
If $C$ is hyperelliptic, by Lemma \ref{lemma1} the Albanese map fails to
be injective exactly on $\Gamma\subset J(C)$. Therefore,
Proposition \ref{baselocus} implies that
$$
\op{Bs}|K_{C_2}|=\Gamma
$$
and the only connected component of the base locus is $\Gamma$, 
which is irreducible. Hence the intersection matrix is 
just $\Gamma^2=1-g<0$, so it is negative definite.

If $C$ is not hyperelliptic, then the Albanese map is injective. Proposition 
\ref{baselocus} implies that $K_{C_d}$ has no base locus, and hence there is no non-trivial semi-orthogonal decomposition by Theorem 
\ref{KO-finite}.

In view of Lemmas \ref{lemma1}, \ref{lemma2}, and \ref{lemma3},
the hypothesis of Theorem \ref{KO-surface} are satisfied, so there is
no non-trivial semi-orthogonal decomposition.
\end{proof}

\begin{remark}
We thank an anonymous referee for the following alternative argument 
to check the assumptions of Theorem \ref{KO-surface}. By Lemma
\ref{lemma:canonicald} and the assumption $g\,\geq\,3$, we know that
the canonical divisor $K_{C_2}$ is nef and big. Hence $C_2$ is minimal
surface of general type. Since $C_2$ is a minimal model, the Albanese
map should be birational onto its image (otherwise, $C_2$ is either
$\PP^2$ or a ruled surface, contradiction). In other words, the
morphism of $C_2$ onto its image is a resolution of singularities of a
surface $u(C_2)$. Now the base locus of $K_{C_2}$ coincides with the
$u$-exceptional curve (Proposition \ref{baselocus}), hence the
intersection matrix of each connected component is negative definite.
\end{remark}

\section*{Acknowledgements}

It is a pleasure to express our deep gratitude to M. S. Narasimhan
for drawing our attention to this problem. T. G. and K.-S. L. thank
him for many helpful discussions which took place during several visits to the Indian Institute of Science (Bangalore). 
We heartily thank Gadadhar Misra for kind hospitality.
I. B. thanks C. Ciliberto for a very useful correspondence.
We thank R. Thomas for informing us about \cite{To} and
S. Mukhopadhyay for \cite{BGM}.

This article was finished during a visit of I.B. and T.G. to the
International Center for Theoretical Sciences (ICTS) during the
program Quantum Fields, Geometry and Representation Theory (Code:
ICTS/qftgrt/2018/07).
We thank S. Mehrotra for discussions and K.-S. Lee thanks S. Okawa
for answering questions.
Part of this work was done while K.-S. Lee
was a research fellow of KIAS.

I. B. is partially supported by a J. C. Bose Fellowship.
T. G. is supported by
the European Union (61253 MODULI 7th Framework Programme), 
the Spanish Ministerio de Ciencia e Innovaci\'on
(MTM2016-79400-P and Severo Ochoa programme for Centres of Excelence in R\&D SEV-2015-0554), and CSIC  (\textit{Ayuda extraordinaria a Centros de Excelencia Severo Ochoa} 20205CEX001). 
K.-S. L. is supported by
the Institute for Basic Science (IBS-R003-Y1).

\end{document}